\title{Stability of Canonical Forms}
\theoremstyle{plain}
\newtheorem{theorem}{Theorem}[section]
\newtheorem{corollary}[theorem]{Corollary}
\newtheorem{proposition}[theorem]{Proposition}
\newtheorem{lemma}[theorem]{Lemma}
\theoremstyle{definition}
\newtheorem{definition}[theorem]{Definition}
\newtheorem{example}[theorem]{Example}
\newtheorem{remark}[theorem]{Remark}
\theoremstyle{remark}
\newcommand{\A}{{\mathcal{A}}}
\newcommand{\B}{{\mathcal{B}}}
\newcommand{\C}{{\mathcal{C}}}
\newcommand{\E}{{\mathcal{E}}}
\renewcommand{\O}{{\mathcal{O}}}
\renewcommand{\S}{{\mathcal{S}}}
\newcommand{\T}{{\mathcal{T}}}
\newcommand{\X}{{\mathcal{X}}}
\newcommand{\bR}{\mathbb{R}}
\newcommand{\bE}{\mathbb{E}}
\newcommand{\Ba}{{\mathbf{a}}}
\newcommand{\Bu}{{\mathbf{u}}}
\newcommand{\sE}{{\mathscr{E}}}
\newcommand{\sT}{\mathscr{T}}
\newcommand{\AND}{\text{ and }}
\newcommand{\FORAL}{\text{ for all }}
\newcommand{\OR}{\text{ or }}
\newcommand{\bnorm}[2]{\big\|#2\big\|_{#1}}
\newcommand{\Bnorm}[2]{\Big\|#2\Big\|_{#1}}
\newcommand{\norm}[2]{\|#2\|_{#1}}
\renewcommand{\epsilon}{\varepsilon}
\newcommand{\thmref}[2]{#1 \ref{#2}}
\renewcommand{\vec}{\textbf{vec}}
\newcommand{\pd}[2]{\frac{\partial #1}{\partial #2}}
\renewcommand{\tt}[1]{\tsr{T}^{(#1)}}
\newcommand{\vcr}[1]{\bm{#1}}
\newcommand{\mat}[1]{\bm{#1}}
\newcommand{\tsr}[1]{\bm{\mathcal{#1}}}
\newcommand{\set}[1]{#1}
\newcommand{\tn}[1]{\mathcal{#1}}
\begin{document}

\title{On Stability of Tensor Networks and Canonical Forms}
\author[Y. Zhang and E. Solomonik]{Yifan Zhang, Edgar Solomonik}
\address{
Department of Computer Science\\
University of Illinois at Urbana-Champaign}
\email{yifan8@illinois.edu, solomon2@illinois.edu}
\maketitle

\onehalfspacing

\begin{abstract}
    Tensor networks such as matrix product states (MPS) and projected entangled pair states (PEPS) are commonly used to approximate quantum systems.
    These networks are optimized in methods such as DMRG or evolved by local operators.
    We provide bounds on the conditioning of tensor network representations to sitewise perturbations.
    These bounds characterize the extent to which local approximation error in the tensor sites of a tensor network can be amplified to error in the tensor it represents.
    In known tensor network methods, canonical forms of tensor network are used to minimize such error amplification.
    However, canonical forms are difficult to obtain for many tensor networks of interest.
    We quantify the extent to which error can be amplified in general tensor networks, yielding estimates of the benefit of the use of canonical forms.
    For the MPS and PEPS tensor networks, we provide simple forms on the worst-case error amplification.
    Beyond theoretical error bounds, we experimentally study the dependence of the error on the size of the network for perturbed random MPS tensor networks.
\end{abstract}


\section{Introduction} \label{s.intro}




Tensor networks are widely utilized in computational physics to approximate quantum states and represent Hamiltonian operators~\cite{markov2008simulating,orus2014practical,HUCKLE2013750}.
1D and 2D tensor networks are most prevalent and are referred to as matrix product states (MPS), also known as tensor train (TT)~\cite{holtz2012alternating}, and projected entangled pair states (PEPS), respectively~\cite{verstraete2008matrix,lubasch2014algorithms,verstraete2004renormalization}.
However, other types of tensor networks are also used to represent different families of functions~\cite{ye2018tensor}.

Multiple tensor networks may represent the same tensor, even when the structure of the network is fixed. 
A simple example is the matrix-matrix product, or equivalently a 2-site MPS, $\mat{T} = \mat{A}\mat{B}$. 
One can take any invertible matrix $\mat M$ of compatible shape, and set $\mat{A}' = \mat{A}\mat M$ and $\mat{B}' = \mat M^{-1}\mat{B}$, then we obtain another representation $\mat{T} = \mat{A}'\mat{B}'$. 
This degree of freedom is called the gauge freedom of the network, and tensor network algorithms attempt to restrict the gauge to tensor networks that do not suffer much from numerical instabilities, see for example~\cite{evenbly2018gauge}, 
as we will see in our discussion that different gauges may have different properties regarding numerical stability.
A canonical form is a particular gauge with a prescribed center (a tensor node or a set of tensor nodes), such that all but the center nodes in the network contract to an isometric operator (isometric environment matrix). Rigorous definition is given in definition \ref{def.canon}.
The choice of gauge can involve trade-offs between computational cost and the achieved accuracy and stability.

In computational physics and chemistry, one is often interested in the minimum eigenpair (ground state and its energy) or near-minimum eigenpairs (excited states and their energy).
The density matrix normalization group (DMRG) algorithm computes these quantities by optimizing 1D tensor network representations of these states~\cite{RevModPhys.77.259,white1992density,white1993density}.
To optimize each site, DMRG puts the tensor network into a canonical form with that site as the center, computing an orthogonal projection of the eigenvalue problem to a reduced eigenproblem. 
When leveraging tensor networks including loops, such as PEPS, canonicalization of the tensor network is costly and difficult to perform accurately (for MPS, a sequence of singular value decompositions suffices)~\cite{haghshenas2019conversion,zaletel2019isometric,hyatt2019dmrg}.
Without a canonical form, a nonorthogonal projection can be used to yield a reduced generalized eigenproblem for each site~\cite{DAVIDSON198949}.
The second benefit of canonicalization is an improvement in numerical stability~\cite{haghshenas2019conversion,RevModPhys.77.259}. 
Further, in methods for solving linear systems and least squares problems with MPS representations, canonical forms provide guarantees that reduced problems are at least as well conditioned as the overall matrix problem~\cite{holtz2012alternating}.
We aim to further quantify benefits of canonical forms and conditioning of general tensor networks.

We study the stability of tensor networks and attainable accuracy in tensor network eigenvalue problems, with a special focus on the effect of canonical forms.
First, in Section~\ref{s.wrst}, we consider the conditioning of general tensor networks and prove worst-case error bounds on the amplification of a perturbation of a tensor network state with respect to a perturbation on one or all of its sites, 
and we demonstrate that our worst-case bounds are tight. 
Our results show that this amplification depends on the norms of environments: matrices describing the relation between a site and the overall tensor network state.
Next in Section \ref{s.ave}, we utilize tools from Section \ref{s.wrst} to provide an average case analysis that predicts typical error amplification.

In Section~\ref{s.stab_met} and \ref{s.appl}, we apply the general bounds to tensor network computations with MPS and PEPS.
The bounds allow us to ascertain the effect of truncation of a single site or of all sites onto the overall tensor state.
Further, we derive worst-case bounds on the attainable accuracy of eigenpair computations using DMRG for MPS in both general and canonical forms. Such results can be directly generalized to columnwise canonicalized PEPS introduced in~\cite{haghshenas2019conversion,zaletel2019isometric,hyatt2019dmrg}.
Our analysis demonstrates that canonical forms generally provide benefits in both attainable accuracy in DMRG and time evolution algorithms, and numerical stability against sitewise perturbation.
Specifically, for single site perturbation, canonical forms centered at the perturbed site guarantees to reduce the worst-case error by a factor of
$\frac{\norm{2}{\mat M}\norm{F}{\tsr T}}{\norm{F}{\mat M\cdot \tsr T}}$, 
where $\tsr T$ us the site we are perturbing, and $\mat M$ is the environment of the site. While for all-site perturbation, canonical forms improve the worst-case error by a factor of $\frac{\norm{2}{\mat M}\norm{F}{\tsr T}}{\sqrt{D}\norm{F}{\mat M\cdot \tsr T}}$, where $D$ is the dimension of contracted mode 
between $\mat{M}$ and $\tsr T$. Though technically this factor can be smaller than 1, canonical form still brings benefits when the few largest singular values of $\mat{M}$ are dominating (i.e. $\norm{2}{\mat{M}} \approx \norm{F}{\mat{M}}$), say when they are exponentially decreasing. This is justified in our numerical examples in Section \ref{s.num}. 
Further, the attainable accuracy for canonical form is better by a similar factor as in the single-site perturbation.

Our theoretical analysis is confirmed by numerical experiments in Section~\ref{s.num}.
We first 
compute the worst-case errors in the overall state due to single-site perturbations
for randomly generated 1D tensor network states (MPS) and their canonical forms. This models the worst-case error of center truncation.
We quantify the stability benefits of canonical forms and the sensitivity of this benefit to increase in bond dimension or number of sites.
The benefit is observed to increase relative to bond dimension, but not strongly related to site count.
For an MPS with 32 $\sim$ 80 sites, bond dimension $2^7 = 128$, truncation at the central node yields a factor of $4 \sim 4.5$ less worst-case error if the MPS is canonicalized (towards the central node), averaged over all sampled MPS networks.
Second, we illustrate the errors for all-site perturbation by perturbing randomly sampled MPS networks with randomly generated noise. 
We observe that despite in theory canonical form may not have a better stability for some particular MPS, in most of the cases we can expect better stability when in a canonical form. In addition, such advantage in stability tends to increase as bond dimension grows.
Lastly, we numerically confirm our result in Section \ref{s.ave} on average-case error.
Overall, we confirm the stability benefits of canonicalization and add to the theoretical understanding of its dependence on the particular tensor network.


\section{Tensor Networks} \label{s.prelim}
In denoting vectors, matrices, and tensors, we follow notational conventions common in tensor decomposition literature~\cite{doi:10.1137/07070111X}, but employ different elementwise indexing notation.
We use calligraphic letters $(\tn T,~\tn C,~\ldots)$ to denote tensor networks formed by many tensor nodes. 
In addition, we use script letters $(\sT,~\sE,~\ldots)$ and also lower-case Latin letters $(f,~g,~\ldots)$ to denote functions.
We say a tensor $\tsr{T}$ has order $k$ if it has $k$ modes/indices/legs, each of which has a corresponding dimension. 
For a tensor network $\tn{T}$, we use its bold letter $\tsr{T}$ to denote the contraction output.

A tensor network (TN) defines the contraction of a set of tensors, in a certain way prescribed by the network, into a single output tensor. 
It corresponds to an undirected graph where vertices (also referred to as nodes or sites) are tensors, while edges can either go between vertices, in which case they denote summation indices (contracted legs), or can be adjacent to a single vertex (uncontracted legs), in which case they correspond to modes/legs of the tensor given by contraction of the network.
\begin{definition}\label{d.tn}
We denote a tensor network (TN) represented by graph $G$ as
\[\tn T = (G,\tt{1},~\tt{2},~\ldots,~\tt{n})\]
and define TN-forming function $\sT_G(\tt{1},~\tt{2},~\ldots,~\tt{n})=\tsr T$ as the map replacing each vertex of $G$ with tensors (of compatible shape) $\tt{1},~\tt{2},~\ldots,~\tt{n}$ and contracting to form the output tensor $\tsr T$.
Let $G = (\set{V}, \set{E})$ be a graph with $\set{V} = \{v_1,~v_2,~\ldots,~v_n\}$.
Let $\set{E}=\set{C}\cup \set{U}$, where $\set{C}$ are edges connecting two distinct vertices, representing contracted legs, and $\set{U}$ are self-loops representing uncontracted legs.
For each $v_j \in \set{V}$, let $\set{C}_j := \{ (u, w) \in \set{C}~|~u = v_j \OR w = v_j \}$ be the set of edges (contracted legs) adjacent to $v_j$.
Suppose we put $\tt{j}$ at vertex $v_j$.
Let $\set{U}_j$ be the set of uncontracted legs (loops) of vertex $\tt{j}$, then $\set{U} = \bigcup_{j = 1}^n \set{U}_j$.
Generically, for a subgraph $H = (\set{V}', \set{E}')$, we also use notation $\set{U}_H = \bigcup_{v_j \in \set{V}'} \set{U}_j$ and $\set{C}_H = \bigcup_{v_j \in \set{V}'} \set{C}_j$. 
We associate a unique index with each uncontracted or contracted index. 
We index an entry of a tensor $\tt{j}$ by $\left(\tt{j}\right)_{\set{C}_j \cup \set{U}_j}$, e.g., if $\set{C}_j$ contains two edges indexed respectively as $p$ and $q$, while $\set{U}_j$ contains uncontracted legs indexed $r$ and $s$, this entry is $\left(\tt{j}\right)_{\{pq\} \cup \{rs\}} \equiv \left(\tt{j}\right)_{pqrs}$.
The function $\sT_G$ then outputs a mode $|\set{U}|$ tensor $\T$,
and each entry can be computed by
\begin{equation} \label{eq.defT1}
    \tsr T_{\set{U}} = \sum_{\set{C}} \prod_{j = 1}^n \left(\tt{j}\right)_{\set{C}_j \cup \set{U}_j},
\end{equation}
where summing over set $\set{C}$ means summing over all indices (i.e. edges) in $\set{C}$, and by our indexing convention, $\tsr T_{\set{U}}$ is an element of $\tsr T$ indexed by elements in $\set{U}$. 
\end{definition}
We note the fact that the right-hand side of \eqref{eq.defT1} is linear in each $\tt{j}$, yielding the following proposition.
\begin{proposition} \label{prop.lin}
The function $\sT_G$ is multilinear.
\end{proposition}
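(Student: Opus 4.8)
The plan is to prove multilinearity by showing that $\sT_G$ is linear in each argument $\tt{j}$ separately, with the other arguments held fixed. Multilinearity is exactly this statement: a function of several tensor-valued arguments is multilinear if, fixing all arguments but one, the resulting map in the remaining argument is linear. So the entire proof reduces to verifying linearity coordinatewise in a single slot.

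First I would fix an arbitrary index $j \in \{1, \dots, n\}$ and fix all the other tensor arguments $\tt{1}, \dots, \tt{j-1}, \tt{j+1}, \dots, \tt{n}$. I then consider the map $\tt{j} \mapsto \sT_G(\tt{1}, \dots, \tt{n})$ and must show it respects addition and scalar multiplication. The key observation, already flagged in the remark preceding the proposition, is the defining formula \eqref{eq.defT1}: each output entry $\tsr T_{\set{U}}$ is a sum over the contracted indices $\set{C}$ of a product $\prod_{i=1}^n (\tt{i})_{\set{C}_i \cup \set{U}_i}$. In this product, the argument $\tt{j}$ appears in exactly one factor, namely $(\tt{j})_{\set{C}_j \cup \set{U}_j}$, and that factor is a single entry of $\tt{j}$, hence a linear functional of $\tt{j}$. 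Every other factor is independent of $\tt{j}$.

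The main step is then a routine interchange of operations. Writing $\tt{j} = \alpha \mathcal{A} + \beta \mathcal{B}$ for scalars $\alpha, \beta$ and tensors $\mathcal{A}, \mathcal{B}$ of the same shape as $\tt{j}$, the single factor involving $\tt{j}$ splits as $(\alpha \mathcal{A} + \beta \mathcal{B})_{\set{C}_j \cup \set{U}_j} = \alpha \mathcal{A}_{\set{C}_j \cup \set{U}_j} + \beta \mathcal{B}_{\set{C}_j \cup \set{U}_j}$ by the definition of addition and scalar multiplication of tensors entrywise. Substituting into \eqref{eq.defT1}, I can distribute the product over this sum (all other factors being constants with respect to this decomposition), pull $\alpha$ and $\beta$ out of the product, and then out of the outer sum over $\set{C}$ by linearity of finite summation. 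This yields $\sT_G(\dots, \alpha \mathcal{A} + \beta \mathcal{B}, \dots) = \alpha \sT_G(\dots, \mathcal{A}, \dots) + \beta \sT_G(\dots, \mathcal{B}, \dots)$ entrywise, establishing linearity in the $j$-th slot. Since $j$ was arbitrary, $\sT_G$ is multilinear.

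I do not anticipate a genuine obstacle here; the content is essentially the distributive law together with the fact that the contraction sum touches each fixed argument exactly once per factor. The only point requiring minor care is bookkeeping: making clear that in the product $\prod_{i=1}^n (\tt{i})_{\set{C}_i \cup \set{U}_i}$ precisely one factor depends on $\tt{j}$, so that the product is affine-linear (indeed linear) in that factor and the distribution is valid. This is immediate from the indexing convention in Definition~\ref{d.tn}, since each vertex contributes one factor indexed by its own legs.
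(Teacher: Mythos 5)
Your argument is correct and matches the paper's reasoning exactly: the paper justifies the proposition by the single observation that the right-hand side of \eqref{eq.defT1} is linear in each $\tt{j}$, since each term of the sum over $\set{C}$ contains exactly one factor drawn from $\tt{j}$. You have simply written out that same observation in full detail, so no further comparison is needed.
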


To formally define the notion of environment and canonical form, we need to define a sub-network induced by a subset of vertices. Let $H_G(\set{V}') = (\set{V}', \set{E}')$ extract a subgraph of $G$ induced by taking the vertices $\set{V}'$, their loops, and their edges to other vertices in $\set{V}'$. 
\begin{definition} [Sub-network from induced subgraph] \label{def.subn}
We define sub-network $S[G, {\set V'}]$ with $G=(V,E)$ to be a graph $(\set{V}', \set{E}'')$, where $\set{E}''$ is obtained by adding to subgraph $H_G(\set{V}') = (\set{V}', \set{E}')$ loops $(u, u)$ for each $u\in \set{V}'$ and $(u,v) \in \set{E}$ with $v \notin \set{V}'$.
\end{definition}
%

With these definitions, we are able to define the environment of vertices $V'$ as follows.

\begin{definition} [Environment tensor of a sub-network]
    Given a set of vertices $\set{V}' = \{v_1,~v_2,~\ldots,~v_k\}$, the environment tensor is $\sT_{S[G,\set{V}\setminus\set{V}']}(\tt{k+1},~\tt{k+2},~\ldots,~\tt{n})$.
\end{definition}

\begin{definition} [Environment matrix of a sub-network] \label{def.sub_tns_env_mat}
Let $H=H_G(\set V')$ be a proper induced subgraph of $G$. 
Let $\mat{I}_{H}$ be the identity matrix $\bigotimes_{e \in \set{U}_{H}} \mat{I}_e$. Let $\mat{N}_{H}$ be the matricization of the environment tensor $\sT_{S[G,\set{V}\setminus\set{V}']}(\tt{k+1},~\tt{k+2},~\ldots,~\tt{n})$ where the rows iterate over indices $\set{U}_{G\setminus H}$ ($G\setminus H$ is the induced subgraph of $G$ by vertices $V\setminus V'$). The environment matrix is $\mat{M}_{\set V'}=  \mat{N}_{H} \otimes \mat{I}_H$. 
\end{definition}

With these definition, the tensor represented by the tensor network can be regarded as the result of a matrix-vector product with the environment tensor and the contracted extracted sub-network induced from vertices $\set V'=\{v_1,\cdots, v_k\}$.
\begin{equation} \label{eq.matvec_repr}
    \vec(\sT_{G}(\tt{1},~\tt{2},~\ldots,~\tt{n})) = \mat{M}_{\set V'} \cdot \vec(\sT_{G}(\tt{1},~\tt{2},~\ldots,~\tt{k})).
\end{equation}
For sake of brevity, we assume in the following discussion the underlying graph $G$ is fixed with $n$ vertices, and write $\sT_G$ simply as $\sT$. Finally, we can define the canonical form of a tensor network.

\begin{definition} [Canonical form] \label{def.canon}
    A tensor network is said to be in a canonical form centered at vertex $\tau$ if the environment matrix $\mat{M}_{(\tau)}$ is an isometry (orthogonal matrix with at least as many rows as columns).
\end{definition}

We will limit our discussion to cases where $\tt{j}$ is a single tensor node in $\T$ for a cleaner notation. 
It is evident that what is discussed below will still apply if $\tt{j}$ is replaced with a sub-network.
As an example, one can regard each column of a PEPS as one node, then reduce a PEPS to a MPS.

\begin{definition} [Environment matrix] \label{def.env_mat}
    Let $\T = (G,~\tt{1},~\tt{2},~\ldots,~\tt{n})$ be a general tensor network. The environment matrix of the entire network, or simply the environment matrix, is defined as
    \begin{equation}
        \mat{M}_\T := \begin{bmatrix} \mat{M}_{\tt{1}} ~ \mat{M}_{\tt{2}} ~ \cdots ~ \mat{M}_{\tt{n}} \end{bmatrix},
    \end{equation}
    where as defined in definition \ref{def.sub_tns_env_mat}, $\mat{M}_{\tt{j}}$ denotes the environment matrix of the sub-network of $\tt{j}$.
\end{definition}


\begin{example} \label{ex.defEM}
Let us use a simple example to illustrate these definitions. 
Suppose $\T = (G,~A,~B,~C,~D,~E)$ is the tensor network shown in Figure \ref{fig.ex28}(a). 
The outgoing legs are marked in red.
It corresponds to the graph with $\set{V} = \{A, B, C, D, E\}$ and edges including $a = (A,B),~b = (B,C),~g = (C,E),~p = (B,B),~q = (B,B)$ (since the graph is not simple, using $(u,v)$ to index edges is not well-defined anymore. Here we use it to illustrate the idea) and so on.
As an example, $\set{U}_B = \{p,~q\}$, and $\set{C}_D = \
\{c,~d,~f\}$. 
$\E_E$, the environment tensor of sub-network $E$, is given on the left-hand side of Figure \ref{fig.ex28}(b).
The matricization $\mat N$ of $\E_E$, mentioned in definition \ref{def.sub_tns_env_mat}, is formed as the right-hand side of Figure \ref{fig.ex28}(b). 
By definition \ref{def.env_mat}, $\mat{M}_E$ is then given by $\mat N \otimes \mat{I}_s$, where $\mat{I}_s$ is the identity matrix with dimension the same as leg $s$. 

\begin{figure}[h]
\begin{subfigure}[Tensor Network]
{
\centering
\includegraphics[width=0.27\textwidth, height = 4.2cm]{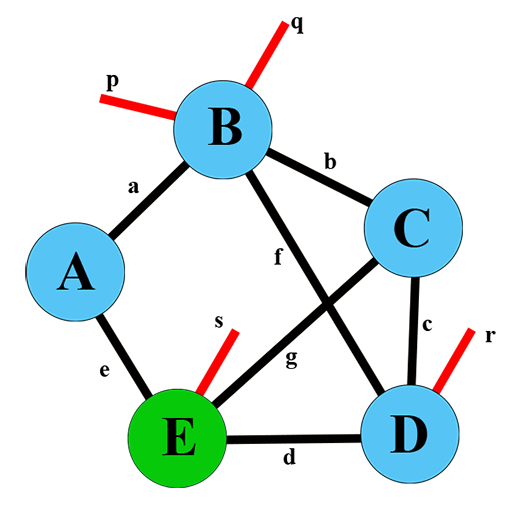}
\label{fig.ex28a}
}
\end{subfigure}
\begin{subfigure}[Environment Matrix]
{
\centering
\includegraphics[width=0.67\textwidth, height = 4.2cm]{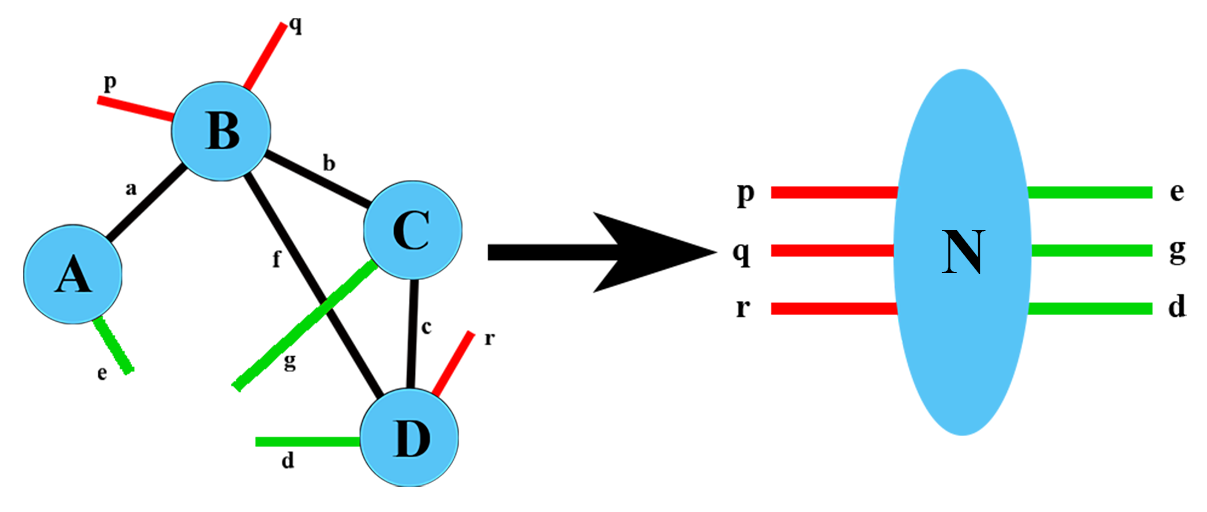}
\label{fig.ex28b}
}
\end{subfigure}
\captionlistentry{}
\label{fig.ex28}
\caption*{\smaller{Fig.\ref{fig.ex28}. Depictions of example tensor network with some contracted legs (edges between a pair of vertices) and uncontracted legs (edges adjacent to a single vertex), as well as formation of an environment matrix for one of the sites.}}

\end{figure}
\end{example}

\begin{definition}[Sitewise perturbation]
Let $\T = (G,\tt{1},~\tt{2},~\ldots,~\tt{n})$ be a tensor network. We say that $\hat{\tsr T} = \sT(\tt{1} + \tsr{\delta}^{(1)},~\tt{2} + \tsr \delta^{(2)}, ~\ldots,~ \tt{n} + \tsr \delta^{(n)})$ is the sitewise perturbed tensor network by tensors $(\tsr \delta^{(i)})_{i = 1}^n$. We call $(\tsr \delta^{(i)})_{i = 1}^n$ the sitewise perturbation. When no confusion is made, we call for brevity $\hat{\tsr{T}}$ the perturbed tensor network and $(\tsr \delta^{(i)})_{i = 1}^n$ the perturbation.
\end{definition}

In most cases, a relative perturbation best characterizes the error in tensor network algorithms like DMRG. 
To avoid exponentially growth in the bond dimension, some low rank approximation of tensors in MPS is usually needed, and thus the dominating error in representing a state by MPS is often caused by the truncation. 
The truncation strategy is usually based on singular values of the tensor to be compressed, and the error is correspondingly bounded relatively in Frobenius norms.

\begin{definition} [$\epsilon$-perturbation]
    \sloppy
    An $\epsilon$-perturbation to tensor network $\T = (G,\tt{1},\tt{2},\\ \ldots,\tt{n})$ is a sitewise perturbation $(\tsr{\delta}^{(i)})_{i = 1}^n$ such that $\norm{F}{\tsr{\delta}^{(i)}} \leq \epsilon \norm{F}{\tt{i}}$ for all $i$.
\end{definition}


To formally define error measures, consider tensors $\tt{1},~ \tt{2},~ \ldots ,~\tt{n}$ and let $\tsr T = \sT(\tt{1},~ \tt{2},~ \ldots ,~\tt{n})$.
Let $\tn{\delta} = (\tsr{\delta}^{(i)})_{i = 1}^n$ be a sitewise perturbation. Let $\hat{\tsr T} = \sT((\tt{1} + \tsr{\delta}^{(1)}),~ (\tt{2} + \tsr{\delta}^{(2)}),~ \ldots ,~ (\tt{n} + \tsr{\delta}^{(n)}))$ be the perturbed tensor network. We will use
$$
     \sE_a(\tn T, \tn{\delta}) := \norm{F}{\hat{\tsr T} -\tsr T}
$$
to denote the absolute error, and use
$$
    \sE_r(\tn T, \tn \delta) := \frac{\norm{F}{\hat{\tsr{T}} -\tsr{T}}}{\norm{F}{\tsr{T}}}
$$
for the relative error. Here $\norm{F}{\cdot}$ denotes the Frobenius or Hilbert-Schmidt norm.
In our discussion, the error is often controlled by $\norm{F}{\tsr{\delta}^{(i)}} \leq \epsilon_i$, for some positive small constant $0 \leq \epsilon_i \ll 1$.
A control on relative error also falls in this category by setting $\epsilon_i = \epsilon \norm{F}{\tt i}$ for some $0 \leq \epsilon \ll 1$. We will specify these controlling conditions whenever we introduce the perturbation. 


We primarily look at tight worst-case error bounds. The best possible worst-case error bound is the uniformly tight bound, as defined below. 
%

\begin{definition}[Uniformly tight bound]
Let $\X$ be a tensor network and $\S$ be the collection of all possible perturbation to $\X$. 
Let $f(\X, \tn{\delta})$ be a measure of error for $\tn{\delta} \in \S$ and $g(\X)$ be an error bound. 
We say $g$ is the uniformly tight bound if $\sup_{\tn{\delta} \in \S} f(\X, \tn{\delta}) = g(\X)$ holds for all $\X$.
\end{definition}

In our discussion, we will often need CP rank-1 tensors representing a product state to show tightness of bounds. We define it here. 

\begin{definition} [Product state network]
    Let $\tn T = (G,\tt 1,~ \tt 2,~ \ldots ,~\tt n)$ with real tensors $\tt i$ and $G=(\set{V}, \set{E})$. $\tn T$ is a product state network if
    there exists a set of unit vectors $\{\Bu_e\}_{e\in\set{E}}$ such that 
    for $\tt{j}$ at vertex $v_j$, $\tt{j} = \norm{F}{\tt{j}} \bigotimes_{e\in \set{E}} \Bu_{e}$.
\end{definition}

This definition can be extended to the complex case with appropriate use of conjugation. For each edge $e = (u, v)$ involving node tensors $\tt u$ and $\tt v$, one node tensor is composed with $\Bu_e$ in the tensor product while the other has $\Bu_e^\dagger$. Thus in the following discussion we do not distinguish between real and complex product state network.

%
%
%
%
%
%


\section{Worst-Case Perturbation Bounds for Tensor Networks} \label{s.wrst}

The conditioning of a tensor network $\tn T$ with graph $G$ can be regarded as the conditioning of the network-forming function $\sT_G$ involving independent variables $\tt{1},~\tt{2},~\cdots,~\tt{n}$. One natural measure of the stability of a tensor network is the condition number of function $\sT$ at $(\tt{1},~\tt{2},~\ldots,\tt{n})$. 

\begin{definition} [Tensor network absolute condition number] \label{d.stab_a}
    We define the relative condition number of a tensor network $\tn T = (G,\tt{1},~\tt{2},~\cdots,~\tt{n})$ as
    \begin{equation}
        \kappa_a(\tn T) := \lim_{\epsilon \rightarrow 0^+} \sup_{\tn{\delta} \in \S_\epsilon} \frac{\sE_a(\T,\tn{\delta})}{\epsilon},
    \end{equation}
    where $\S_\epsilon = \{\text{sitewise perturbations } (\tsr{\delta}^{(i)})_{i = 1}^n ~|~  \sum_i \norm{F}{\tsr{\delta}^{(i)}} \leq \epsilon$\}.
\end{definition}
\begin{definition} [Tensor network relative condition number] \label{d.stab_a}
    The relative condition number of a tensor network $\tn T = (G,\tt{1},~\tt{2},~\cdots,~\tt{n})$ representing $\tsr T = \sT(\tt{1},~\tt{2},~\cdots,~\tt{n})$ is
    \begin{equation}
        \kappa_r(\tn{T}) := \frac{\sum_{i=1}^n\norm{F}{\tt{i}}}{\norm{F}{\tsr T}}\kappa_a(\tn{T}) = \frac{\sum_{i=1}^n\norm{F}{\tt{i}}}{\norm{F}{\tsr T}}\lim_{\epsilon \rightarrow 0^+} \sup_{\tn{\delta} \in \S_\epsilon} \frac{\sE_a(\tn{T},\tn{\delta})}{\epsilon},
    \end{equation}
    where $\S_\epsilon = \{\text{sitewise perturbations } (\tsr{\delta}^{(i)})_{i = 1}^n ~|~  \sum_i \norm{F}{\tsr{\delta}^{(i)}} \leq \epsilon\}$.
\end{definition}

\begin{remark}
For now we always assume all node tensors $\tt{1},~\tt{2},~\cdots,~\tt{n}$ in the network are independent, i.e. they can be perturbed independently. It is possible to generalize the following approach to tensor networks involving dependent node tensors. See remark \ref{rmk.depend_tns} for detail.
\end{remark}

\begin{lemma} [Conditioning of tensor networks] \label{l.stab}
    Let $\tsr T = \sT_G(\tt{1},~\tt{2},~\cdots,~\tt{n})$, then $\kappa_a(\tn{T}) = \max_{i}\norm{2}{\mat{M}_{\tt{i}}}$ and
 $\kappa_r(\tn{T}) = \frac{\sum_{i=1}^n\norm{F}{\tt{i}}}{\norm{F}{\tsr T}}\max_{i}\norm{2}{\mat{M}_{\tt{i}}}$.
\end{lemma}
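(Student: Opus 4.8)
The plan is to establish the formula for the absolute condition number $\kappa_a$; the expression for $\kappa_r$ then follows immediately from Definition~\ref{d.stab_a}, since $\kappa_r(\tn T)$ is defined there as $\frac{\sum_i \norm{F}{\tt i}}{\norm{F}{\tsr T}}\kappa_a(\tn T)$. To handle $\kappa_a$, I would first linearize the perturbed network. By Proposition~\ref{prop.lin} the map $\sT$ is multilinear, so expanding
\[
\hat{\tsr T} = \sT(\tt 1 + \tsr\delta^{(1)},~\ldots,~\tt n + \tsr\delta^{(n)})
\]
produces $\tsr T$, the $n$ first-order terms in which exactly one argument is replaced by $\tsr\delta^{(i)}$, and a remainder collecting all terms containing at least two of the $\tsr\delta^{(i)}$. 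Writing $x_i := \vec(\tsr\delta^{(i)})$ and applying the matrix--vector representation \eqref{eq.matvec_repr} to the single vertex $\set V' = \{v_i\}$ gives $\vec(\sT(\tt 1,\ldots,\tsr\delta^{(i)},\ldots,\tt n)) = \mat M_{\tt i}\, x_i$, so that
\[
\vec(\hat{\tsr T} - \tsr T) = \sum_{i=1}^n \mat M_{\tt i}\, x_i + \vcr r,
\]
with $\vcr r$ the vectorized remainder. Since $\sum_i \norm{F}{\tsr\delta^{(i)}} \le \epsilon$ on $\S_\epsilon$ and, by multilinearity, each summand of $\vcr r$ is bounded by a product of at least two factors $\norm{F}{\tsr\delta^{(i)}}$ times norms of the fixed tensors, one gets $\norm{2}{\vcr r} \le C\epsilon^2$ for a constant $C$ depending only on $\tt 1,\ldots,\tt n$ and $G$.

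Next I would reduce $\kappa_a$ to a linear-algebra extremal problem. Because $\vec$ is an isometry, $\sE_a = \norm{F}{\hat{\tsr T} - \tsr T} = \norm{2}{\vec(\hat{\tsr T}-\tsr T)}$, and the estimate above yields, for every $\tn\delta \in \S_\epsilon$,
\[
\Big|\,\sE_a(\tn T,\tn\delta) - \bnorm{2}{\sum_{i=1}^n \mat M_{\tt i}\,x_i}\,\Big| \le C\epsilon^2 .
\]
Dividing by $\epsilon$ and letting $\epsilon \to 0^+$ therefore reduces the computation to evaluating $\sup \big\{\norm{2}{\sum_i \mat M_{\tt i} x_i} : \sum_i \norm{2}{x_i} \le 1\big\}$, which I claim equals $\max_i \norm{2}{\mat M_{\tt i}}$. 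The upper bound is the triangle inequality with submultiplicativity, $\norm{2}{\sum_i \mat M_{\tt i} x_i} \le \sum_i \norm{2}{\mat M_{\tt i}}\norm{2}{x_i} \le (\max_i \norm{2}{\mat M_{\tt i}})\sum_i \norm{2}{x_i}$, which is at most $\max_i \norm{2}{\mat M_{\tt i}}$ under the constraint. The matching lower bound is attained by concentrating the whole budget on the maximizing site: taking $i^\ast \in \arg\max_i \norm{2}{\mat M_{\tt i}}$, setting $x_{i^\ast}$ to a unit top right singular vector of $\mat M_{\tt{i^\ast}}$, and $x_i = 0$ otherwise, gives $\norm{2}{\sum_i \mat M_{\tt i} x_i} = \norm{2}{\mat M_{\tt{i^\ast}}}$.

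The step needing the most care is the passage to the limit, which must be argued on both sides. For the upper bound the displayed inequality gives $\sE_a/\epsilon \le \max_i\norm{2}{\mat M_{\tt i}} + C\epsilon$ uniformly over $\S_\epsilon$, whence $\limsup_{\epsilon\to 0^+}\sup_{\S_\epsilon}\sE_a/\epsilon \le \max_i\norm{2}{\mat M_{\tt i}}$. For the lower bound one must check the extremal direction is realized by a genuine perturbation: since $x_{i^\ast}$ lies in the domain of $\mat M_{\tt{i^\ast}}$, which is exactly the space of vectorized tensors of the shape of $\tt{i^\ast}$, the choice $\tsr\delta^{(i^\ast)}$ defined by $\vec(\tsr\delta^{(i^\ast)}) = \epsilon\, x_{i^\ast}$ (and $\tsr\delta^{(i)} = 0$ for $i \ne i^\ast$) is an admissible member of $\S_\epsilon$ along which $\sE_a/\epsilon \to \norm{2}{\mat M_{\tt{i^\ast}}} = \max_i\norm{2}{\mat M_{\tt i}}$. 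Combining the two bounds yields $\kappa_a(\tn T) = \max_i \norm{2}{\mat M_{\tt i}}$, and the expression for $\kappa_r$ follows by the defining scaling factor.
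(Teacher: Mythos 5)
Your proposal is correct and follows essentially the same route as the paper: both identify the environment matrices $\mat M_{\tt i}$ as the blocks of the Jacobian of $\sT$ (equivalently, the first-order terms in the multilinear expansion), reduce $\kappa_a$ to maximizing $\bnorm{2}{\sum_i \mat M_{\tt i} x_i}$ over $\sum_i\norm{2}{x_i}\le\epsilon$, and conclude the value $\max_i\norm{2}{\mat M_{\tt i}}$. The only difference is that you spell out the second-order remainder bound and the attainment of the supremum (concentrating the budget on the maximizing site), steps the paper's proof leaves implicit.
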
 
\begin{proof}
For a tensor node $	\tt{j}$, in view of \eqref{eq.matvec_repr}, one has
\begin{equation}
    \pd{\vec({\tsr{T}})}{\vec(	\tt{j})} = \mat{M}_{\tt{j}}.
\end{equation}
Thus if we view the arguments $\tt{1},~\tt{2},~\cdots,\tt{n}$ as a long vector $\Bu$ (vectorize each $	t{j}$ and concatenate them), then the Jacobian is
\begin{equation}
    \mathbf{J}_{\vec({\tsr{T}})}(\Bu) = \begin{bmatrix} \mat{M}_{\tt{1}} ~ \mat{M}_{\tt{2}} ~ \cdots ~ \mat{M}_{\tt{n}} \end{bmatrix} = \mat{M}_{\tn{T}}.
\end{equation}
Similarly, treat the perturbation $(\tsr{\delta}^{(i)})_{i = 1}^n$ as a long vector $\boldsymbol{\tsr{\delta}}$, then $\sE_a(\tn{T},\tn{\delta}) = \sE_a(\vec({\tsr{T}}), \boldsymbol{\tsr{\delta}})$, and thus
\begin{equation}
    \kappa_a(\tn{T}) 
    = \lim_{\epsilon \rightarrow 0^+} \sup_{\sum_{i=1}^n\norm{F}{\tsr{\delta}_i} \leq \epsilon} 
    \frac{
    \norm{2}{\mat{M}_{\tn{T}} \cdot \boldsymbol{\tsr{\delta}}} + \O\left(\norm{2}{\boldsymbol{\tsr{\delta}}}^2\right)
    }{
    \epsilon
    }
    = \max_i \norm{2}{\mat{M}_{\tt{i}}}
\end{equation}
which completes the proof.
\end{proof}

With this lemma, now we state and prove the results on worst-case error for a general tensor network.

\begin{theorem} [Worst case absolute error for TN] \label{thm.worst_tn}
    Let $\tn T = (G,\tt{1},~\tt{2},~\cdots,~\tt{n})$ be a tensor network.
   Let $\epsilon_i \geq 0 \FORAL 1 \leq i \leq n$ and $\epsilon = \sum_i \, \epsilon_i$. Let the number of entries of $\tt{i}$ be $n_i$. Set $s_i = \sum_{j = 1}^{i-1}n_i$. Suppose ${\tn{T}}$ is perturbed by $(\tsr{\delta}^{(i)})_{i = 1}^n$, where $\norm{F}{\tsr{\delta}^{(i)}} \leq \epsilon_i \FORAL i$. Then up to an error of $\O(\epsilon^2)$, the worst case error $\sup_{\tn{\delta}} \sE_a(\tn{T},\tn{\delta})$ is the solution to the optimization problem
    \begin{equation} \label{eq.unif_bnd}
        \begin{aligned}
            &\max_{{\delta}} \norm{2}{\mat{M}_{\tn{T}} \cdot \vcr{\delta}} 
             \ \text{with} \  \norm{2}{\vcr{\delta}_{(s_i + 1, s_{i+1})}} = \epsilon_i, ~ \forall i,
        \end{aligned}
    \end{equation}
    where $\vcr{\delta}_{(s_i + 1, s_{i+1})}$ is the sub-vector formed by the $s_i + 1$ to $s_{i + 1}$th entry of $\vcr{\delta}$. In particular, an explicit tight bound is  
    \begin{equation} \label{eq.upper_bnd}
        \sE_a(\tn{T},\tn{\delta}) \leq \sum_{i = 1}^n \epsilon_i \norm{2}{\mat{M}_{\tt{i}}} + \O(\epsilon^2).
    \end{equation}
    When ${\tsr{T}}$ contracts to a scalar, the bound \eqref{eq.upper_bnd} is uniformly tight.
    Further, $\sE_a(\tn{T},\tn{\delta})= \kappa_a(\tn T)\epsilon + \O(\epsilon^2)$.
\end{theorem}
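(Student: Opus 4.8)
The plan is to linearize $\sT_G$ around $(\tt{1},\ldots,\tt{n})$ and reduce the worst-case error to a constrained spectral-norm maximization. Since $\sT_G$ is multilinear (Proposition \ref{prop.lin}), expanding $\hat{\tsr T}=\sT_G(\tt{1}+\tsr{\delta}^{(1)},\ldots,\tt{n}+\tsr{\delta}^{(n)})$ and grouping by degree in the perturbations yields $\tsr T$ from the constant term, the sum $\sum_i \mat{M}_{\tt{i}}\cdot\vec(\tsr{\delta}^{(i)})$ from the terms linear in a single $\tsr{\delta}^{(i)}$ (using \eqref{eq.matvec_repr} and the Jacobian identity $\pd{\vec(\tsr T)}{\vec(\tt{i})}=\mat{M}_{\tt{i}}$ established in the proof of Lemma \ref{l.stab}), and a remainder that is a sum of terms each carrying at least two perturbation factors, hence $\O(\epsilon^2)$. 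Writing $\vcr{\delta}$ for the concatenation of the $\vec(\tsr{\delta}^{(i)})$, this reads $\vec(\hat{\tsr T}-\tsr T)=\mat{M}_{\tn{T}}\cdot\vcr{\delta}+\O(\epsilon^2)$, so $\sE_a(\tn{T},\tn{\delta})=\norm{2}{\mat{M}_{\tn{T}}\cdot\vcr{\delta}}+\O(\epsilon^2)$. Maximizing over $\norm{F}{\tsr{\delta}^{(i)}}\le\epsilon_i$ is then, to leading order, exactly \eqref{eq.unif_bnd}; to justify replacing the inequality constraints by equalities I would note that, with all other blocks fixed, the objective is a convex function of a single block, and a convex function attains its maximum over a ball on the bounding sphere, so the supremum over the product of balls is attained on the product of spheres.

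Next I would establish the explicit bound \eqref{eq.upper_bnd}. By the triangle inequality and submultiplicativity of the spectral norm,
\[
\begin{aligned}
  \norm{2}{\mat{M}_{\tn{T}}\cdot\vcr{\delta}}
  &=\Bnorm{2}{\sum_{i=1}^n \mat{M}_{\tt{i}}\cdot\vec(\tsr{\delta}^{(i)})}\\
  &\le\sum_{i=1}^n\norm{2}{\mat{M}_{\tt{i}}}\,\norm{F}{\tsr{\delta}^{(i)}}
  \le\sum_{i=1}^n\epsilon_i\norm{2}{\mat{M}_{\tt{i}}},
\end{aligned}
\]
which, combined with the expansion of the previous paragraph, gives \eqref{eq.upper_bnd}. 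For uniform tightness when $\tsr T$ contracts to a scalar, each $\mat{M}_{\tt{i}}$ is a single row $\Bm_i^{\mathsf T}$, so $\norm{2}{\mat{M}_{\tt{i}}}=\norm{2}{\Bm_i}$ and $\mat{M}_{\tn{T}}\cdot\vcr{\delta}=\sum_i \Bm_i^{\mathsf T}\vec(\tsr{\delta}^{(i)})$ is a scalar. Choosing $\vec(\tsr{\delta}^{(i)})=\epsilon_i\,\Bm_i/\norm{2}{\Bm_i}$ (its conjugate in the complex case) makes each summand equal the nonnegative scalar $\epsilon_i\norm{2}{\Bm_i}$, so both inequalities above hold with equality and \eqref{eq.upper_bnd} is attained for every scalar-output $\tn T$, i.e. it is uniformly tight. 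For the final identity I would pair the two directions: \eqref{eq.upper_bnd} gives $\sup_{\tn\delta}\sE_a\le\big(\sum_i\epsilon_i\big)\max_i\norm{2}{\mat{M}_{\tt{i}}}+\O(\epsilon^2)=\kappa_a(\tn T)\,\epsilon+\O(\epsilon^2)$ by Lemma \ref{l.stab}, while concentrating the entire budget on an index $i^\ast\in\arg\max_i\norm{2}{\mat{M}_{\tt{i}}}$ with $\vec(\tsr{\delta}^{(i^\ast)})=\epsilon\,\Bv$, where $\Bv$ is the leading right singular vector of $\mat{M}_{\tt{i^\ast}}$, achieves $\sE_a=\epsilon\norm{2}{\mat{M}_{\tt{i^\ast}}}+\O(\epsilon^2)=\kappa_a(\tn T)\,\epsilon+\O(\epsilon^2)$; hence the worst case equals $\kappa_a(\tn T)\,\epsilon+\O(\epsilon^2)$.

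The main obstacle is tightness rather than the bounds themselves. The reduction to \eqref{eq.unif_bnd} and the estimate \eqref{eq.upper_bnd} are routine, but \eqref{eq.upper_bnd} is in general \emph{not} attained: the summands $\mat{M}_{\tt{i}}\cdot\vec(\tsr{\delta}^{(i)})$ are vectors in the output space, and the triangle inequality is sharp only when they are collinear, which cannot be forced simultaneously with each being aligned to the leading singular direction of its own $\mat{M}_{\tt{i}}$. This is exactly why \eqref{eq.unif_bnd}, not \eqref{eq.upper_bnd}, is the genuine worst case in general, and why collinearity---and hence tightness---is automatic precisely when the output is a scalar and all summands lie in a one-dimensional space. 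The $\kappa_a(\tn T)\,\epsilon$ identity sidesteps the issue because single-site concentration involves only one block, whose contribution is saturated by its leading singular vector irrespective of the output dimension.
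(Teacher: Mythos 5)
Your proof is correct and follows essentially the same route as the paper's: linearize the error via the environment matrix $\mat{M}_{\tn{T}}$, apply the blockwise triangle inequality to get \eqref{eq.upper_bnd}, and exploit the fact that each $\mat{M}_{\tt{i}}$ is a row vector in the scalar-output case to align every block with its own transpose and attain equality. Your write-up is if anything slightly more complete than the paper's, since you also justify replacing the ball constraints by sphere constraints (via convexity rather than the paper's appeal to linearity in each block) and you explicitly verify the final identity $\sup_{\tn{\delta}}\sE_a(\tn{T},\tn{\delta})=\kappa_a(\tn{T})\,\epsilon+\O(\epsilon^2)$ by concentrating the perturbation budget on the block maximizing $\norm{2}{\mat{M}_{\tt{i}}}$, a step the paper's proof leaves implicit.
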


\begin{corollary} [Worst case relative error for TN] \label{thm.worst_tn}
    Let $\tn T = (G,\tt{1},~\tt{2},~\cdots,~\tt{n})$ be a tensor network.
  Suppose ${\tn{T}}$ is perturbed by $(\tsr{\delta}^{(i)})_{i = 1}^n$, where $\sum_{i=1}^n \norm{F}{\tsr{\delta}^{(i)}}/\norm{F}{\tt{i}} \leq \epsilon$. The relative error in the tensor network state is $\sE_r(\tn{T},\tn{\delta}) \leq \kappa_r(\tn{T})\epsilon + \O(\epsilon^2)$.
\end{corollary}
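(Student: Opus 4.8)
The plan is to deduce this relative bound directly from the absolute worst-case bound \eqref{eq.upper_bnd} already proved in the preceding theorem, so the only real work is translating between the absolute perturbation budget used there and the relative budget $\sum_{i=1}^n \norm{F}{\tsr{\delta}^{(i)}}/\norm{F}{\tt{i}} \leq \epsilon$ appearing in the hypothesis here. First I would abbreviate $\epsilon_i := \norm{F}{\tsr{\delta}^{(i)}}$, so that the hypothesis reads $\sum_i \epsilon_i/\norm{F}{\tt{i}} \leq \epsilon$, and apply \eqref{eq.upper_bnd} to obtain $\sE_a(\tn{T},\tn{\delta}) \leq \sum_i \epsilon_i \norm{2}{\mat{M}_{\tt{i}}} + \O\!\big((\sum_i \epsilon_i)^2\big)$. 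Invoking Lemma \ref{l.stab}, which identifies $\max_i \norm{2}{\mat{M}_{\tt{i}}} = \kappa_a(\tn{T})$, I would factor out the largest spectral norm to get $\sum_i \epsilon_i \norm{2}{\mat{M}_{\tt{i}}} \leq \kappa_a(\tn{T}) \sum_i \epsilon_i$.

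Next I would convert the absolute budget $\sum_i \epsilon_i$ into the relative budget $\epsilon$ by pulling out the node norms: since $\sum_i \epsilon_i = \sum_i \norm{F}{\tt{i}}\cdot \big(\epsilon_i/\norm{F}{\tt{i}}\big) \leq \big(\max_i \norm{F}{\tt{i}}\big)\sum_i \epsilon_i/\norm{F}{\tt{i}} \leq \big(\sum_i \norm{F}{\tt{i}}\big)\epsilon$, substituting gives $\sE_a(\tn{T},\tn{\delta}) \leq \kappa_a(\tn{T})\big(\sum_i \norm{F}{\tt{i}}\big)\epsilon + \O(\epsilon^2)$. Dividing by $\norm{F}{\tsr T}$ and using the definition of relative error $\sE_r = \sE_a/\norm{F}{\tsr T}$ together with the definition $\kappa_r(\tn{T}) = \frac{\sum_i \norm{F}{\tt{i}}}{\norm{F}{\tsr T}}\kappa_a(\tn{T})$ then yields exactly $\sE_r(\tn{T},\tn{\delta}) \leq \kappa_r(\tn{T})\epsilon + \O(\epsilon^2)$, as claimed.

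The one point requiring care, and the closest thing to an obstacle, is the bookkeeping of the $\O(\epsilon^2)$ remainder: the theorem's error term is quadratic in the \emph{absolute} budget $\sum_i \epsilon_i$, whereas the corollary states a remainder quadratic in the \emph{relative} budget $\epsilon$. I would handle this by observing that the same inequality $\sum_i \epsilon_i \leq \big(\max_i \norm{F}{\tt{i}}\big)\epsilon$ used above makes the two scales comparable in the limit $\epsilon \to 0$, with the node norms $\norm{F}{\tt{i}}$ treated as fixed constants, so that $\O\!\big((\sum_i \epsilon_i)^2\big) = \O(\epsilon^2)$ and the remainder absorbs correctly. Everything else is a chain of elementary norm inequalities, so I do not anticipate further difficulty.
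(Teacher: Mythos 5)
Your argument is correct and is essentially the derivation the paper leaves implicit: the corollary is stated without its own proof (the proof block following it establishes the absolute-error theorem), and the intended route is exactly yours, namely applying the absolute bound \eqref{eq.upper_bnd}, factoring out $\max_i\norm{2}{\mat{M}_{\tt{i}}}=\kappa_a(\tn T)$ via Lemma \ref{l.stab}, converting the relative budget to an absolute one, and dividing by $\norm{F}{\tsr T}$ to recognize $\kappa_r$. Your handling of the $\O(\epsilon^2)$ remainder under the change of budget is also the right observation and closes the only loose end.
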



\begin{proof}
As in the proof above, we regard the perturbation $(\tsr{\delta}^{(i)})_{i = 1}^n$ as a long vector $\vcr{\delta}$. Then from the proof of lemma~\ref{l.stab}, 
\begin{equation}
    \label{eq:norm_pert}
    \sE_a({\tn{T}}, \tn{\delta}) = \norm{2}{\mat{M}_{\tn{T}} \cdot {\vcr{\delta}}} + \O\left(\norm{2}{\vcr{\delta}}^2\right).
\end{equation}
Let $\Ba_i = \vcr{\delta}_{(s_i + 1, s_{i+1})} = \vec(\tsr{\delta}^{(i)})$.
The equality constraint $\norm{F}{\tsr{\delta}^{(i)}} = \epsilon_i$ in \eqref{eq.unif_bnd} holds when optimizing over any $\tsr{\delta}^{(i)}$ with $\norm{F}{\tsr{\delta}^{(i)}} \leq \epsilon_i$ since
$\mat{M}_{\tn{T}} \cdot \vcr{\delta}$ is linear in $\Ba_i$.
Since $\mat{M}_{\tn{T}} \cdot \Bu = \sum_{i = 1}^n \mat{M}_{\tt{i}} \cdot \Bu_i$, the bound \eqref{eq.upper_bnd} follows from \eqref{eq.unif_bnd} as
\begin{equation} \label{eq.proof_of_worst}
    \norm{2}{\mat{M}_{\tn{T}} \cdot \vcr{\delta}} = \norm{2}{\sum_{i = 1}^n \mat{M}_{\tt{i}} \cdot \Ba_i} \leq \sum_{i = 1}^n \norm{2}{\mat{M}_{\tt{i}}} \cdot \norm{F}{\tsr{\delta}^{(i)}} \leq \sum_{i = 1}^n \epsilon_i \norm{2}{\mat{M}_{\tt{i}}}.
\end{equation}

The second inequality 
is tight for some $\tsr{\delta}^{(1)},\ldots,\tsr{\delta}^{(n)}$ given any choice of tensor network ${\tn{T}}$. 
The first inequality is tight if $\mat{M}_{\tt{1}},\ldots,\mat{M}_{\tt{n}}$ share the same largest left singular vector. 
This holds when ${\tsr{T}}$ corresponds to a scalar, since then each $\mat{M}_{\tt{i}}$ is a row vector with a left singular vector $\Bu = [1]$.
In this case, we can always choose $\Ba_i = \norm{F}{\tsr{\delta}^{(i)}}  \mat{M}_{\tt{i}}^\dagger/\norm{F}{\mat{M}_{\tt{i}}}$ to make \eqref{eq.proof_of_worst} an equality.

Lastly, it is straight forward to check that when ${\tn{T}}$ is a product state network, the sitewise perturbation $\tsr{\delta}^{(i)} = \epsilon_i \frac{\tt{i}}{\norm{F}{\tt{i}}}$ achieves the bound.
\end{proof}


\begin{remark}
Without an additional assumption on properties of ${\tsr{T}}$, solving the optimization problem \eqref{eq.unif_bnd} is equivalent to solving a set of quadratic equations. 
As in the proof above, we write $\Ba_i = \vec(\tsr{\delta}^{(i)})$, then the KKT condition tells that the solution to \eqref{eq.unif_bnd} satisfies the quadratic system,
\begin{equation} \label{eq.kkt1}
    {\mat{M}_{\tn{T}}}^\dagger \mat{M}_{\tn{T}} \begin{pmatrix} \Ba_1 \\ \Ba_2 \\ \vdots \\ \Ba_n \end{pmatrix} = \begin{pmatrix} \mu_1\Ba_1 \\ \mu_2\Ba_2 \\ \vdots \\ \mu_n\Ba_n \end{pmatrix},
\end{equation}
\begin{equation} \label{eq.kkt2}
    \norm{2}{\Ba_i}^2 = \epsilon_i^2,~i = 1,~2,~\cdots,~n,
\end{equation}
where $\Ba_i$ and $\mu_i \geq 0$ are unknowns to be solved.
We know the solution to this system exists since solution to \eqref{eq.unif_bnd} exists. 
Left multiply $\begin{pmatrix} \Ba_1^\dagger~\Ba_2^\dagger~\cdots~\Ba_n^\dagger \end{pmatrix}$ on both sides of \eqref{eq.kkt1} and substitute in \eqref{eq.kkt2}, the worst case error is then given by $\sup_{\tn {\delta}} \sE_a(\tn{T},\tn{\delta}) = \sqrt{\sum_{j = 1}^n \mu_j \epsilon_j^2}$ to order $\O(\epsilon^2)$. 
\end{remark}


\begin{corollary} [Error of one-site perturbation] \label{cor.one_site}
Let $\tn T = (G,\tt{1},~\tt{2},~\cdots,~\tt{n})$ be a tensor network. Let $\epsilon > 0$. 
Suppose $\tt{1}$ is perturbed as $\tsr{\hat{T}}{}^{(1)} = \tt{1} + \tsr{\delta}$ for some $\norm{F}{\tsr{\delta}} \leq \epsilon$. Let $\hat{{\tsr{T}}} = \sT(\tsr{\hat{T}}{}^{(1)},~\tt{2},~\cdots,~\tt{n})$. 
Then $\norm{F}{\hat{{\tsr{T}}} - {\tsr{T}}} \leq \epsilon \cdot \norm{2}{\mat{M}_{\tt{1}}}$. 
In particular if $\norm{F}{\tsr{\delta}} \leq \epsilon \norm{F}{\tt{1}}$, then the relative error $\frac{\norm{F}{\hat{{\tsr{T}}} - {\tsr{T}}}}{\norm{F}{{\tsr{T}}}} \leq \epsilon \cdot \frac{\norm{2}{\mat{M}_{\tt{1}}} \norm{F}{\tt{1}}}{ \norm{F}{{\tsr{T}}}}$.
The bound is uniformly tight and bounded above by $\kappa_r(\tn{T})\epsilon$. 
The right-hand side is minimized as $\epsilon$ if ${\tsr{T}}$ is in a canonical form centered at $\tt{1}$.
\end{corollary}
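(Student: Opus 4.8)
The plan is to exploit the fact that a \emph{single}-site perturbation keeps the network-forming map exactly linear, so that—unlike the $\O(\epsilon^2)$ bounds of Theorem~\ref{thm.worst_tn}—no higher-order remainder appears at all. Concretely, I would apply the matrix-vector representation \eqref{eq.matvec_repr} with $\set{V}' = \{v_1\}$ to write $\vec(\tsr T) = \mat M_{\tt 1}\cdot \vec(\tt 1)$. Since only $\tt 1$ is perturbed and $\sT_G$ is multilinear (Proposition~\ref{prop.lin}), the identical relation holds for the perturbed network, whence
\[
\vec(\hat{\tsr T}) - \vec(\tsr T) = \mat M_{\tt 1}\cdot \vec(\tsr\delta)
\]
with no truncated terms. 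Taking norms and using submultiplicativity gives $\norm{F}{\hat{\tsr T} - \tsr T} = \norm{2}{\mat M_{\tt 1}\vec(\tsr\delta)} \le \norm{2}{\mat M_{\tt 1}}\norm{F}{\tsr\delta} \le \epsilon\norm{2}{\mat M_{\tt 1}}$. The relative bound follows by dividing through by $\norm{F}{\tsr T}$ and substituting $\norm{F}{\tsr\delta} \le \epsilon\norm{F}{\tt 1}$.

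For uniform tightness I would take $\tsr\delta$ so that $\vec(\tsr\delta)$ equals $\epsilon$ times (respectively $\epsilon\norm{F}{\tt 1}$ times, in the relative case) the leading right singular vector of $\mat M_{\tt 1}$; this turns the submultiplicative step into an equality and realizes the bound for every $\tn T$. To see the relative bound is dominated by $\kappa_r(\tn T)\epsilon$, I would compare its prefactor against the formula for $\kappa_r$ from Lemma~\ref{l.stab} factor-by-factor, using $\norm{2}{\mat M_{\tt 1}} \le \max_i \norm{2}{\mat M_{\tt i}}$ together with $\norm{F}{\tt 1} \le \sum_i \norm{F}{\tt i}$.

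The one genuinely substantive claim is that the canonical form \emph{minimizes} the right-hand side, attaining the value $\epsilon$. Here I would first show that the prefactor satisfies
\[
\frac{\norm{2}{\mat M_{\tt 1}}\norm{F}{\tt 1}}{\norm{F}{\tsr T}} \ge 1
\]
in \emph{every} gauge, which is immediate from $\norm{F}{\tsr T} = \norm{2}{\mat M_{\tt 1}\vec(\tt 1)} \le \norm{2}{\mat M_{\tt 1}}\norm{F}{\tt 1}$. Then, when $\tn T$ is in a canonical form centered at $\tt 1$, the environment matrix $\mat M_{\tt 1}$ is an isometry (Definition~\ref{def.canon}), so simultaneously $\norm{2}{\mat M_{\tt 1}} = 1$ and $\norm{F}{\tsr T} = \norm{2}{\mat M_{\tt 1}\vec(\tt 1)} = \norm{F}{\tt 1}$; the prefactor collapses to exactly $1$, meeting the lower bound. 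I expect the only care to lie in tracking the two distinct consequences of the isometry hypothesis—it both pins the operator norm to one and enforces the norm preservation $\norm{F}{\tsr T} = \norm{F}{\tt 1}$—and in observing that it is precisely this norm preservation that makes the lower bound tight for arbitrary $\tt 1$, so that no special choice of center tensor is needed.
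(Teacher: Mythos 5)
Your proof is correct and follows essentially the same route as the paper's: both reduce the claim to the environment-matrix representation $\vec(\tsr T)=\mat M_{\tt 1}\cdot\vec(\tt 1)$ and obtain uniform tightness by aligning $\tsr\delta$ with the leading singular direction of the single block $\mat M_{\tt 1}$. If anything, your write-up is more complete than the paper's terse sketch: you explicitly justify why no $\O(\epsilon^2)$ remainder appears (exact linearity in the one perturbed site), and you supply the argument, left implicit in the paper, that the prefactor $\frac{\norm{2}{\mat M_{\tt 1}}\norm{F}{\tt 1}}{\norm{F}{\tsr T}}$ is at least $1$ in every gauge and collapses to exactly $1$ when $\mat M_{\tt 1}$ is an isometry, which is what makes ``minimized'' a correct claim.
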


Most of the proof for this corollary follows from the proof of theorem \ref{thm.worst_tn}. 
One subtle point is the uniform tightness -- since only one site is perturbed, the environment matrix of the tensor network consists of only one block $\mat{M}_{\tt{1}}$. 
Thus the argument holds trivially that $\mat{M}_{\tt{j}}$ share the same largest left singular vector, and hence the uniform tightness of the first inequality. When first inequality is made an equality 

\begin{remark} \label{rmk.depend_tns}
To analyze tensor networks with dependent nodes, we still construct the environment matrix, but instead of allowing arbitrary perturbation that satisfies the norm condition, say $\norm{F}{\tsr{\delta}^{(i)}} \leq \epsilon_i$, we need to pass the dependency between nodes to perturbations on those nodes. 
For example if $\tt{2} = \tt{1}{}^\dagger$, then we must have $\tsr{\delta}_2 = \tsr{\delta}^{(1)}{}^\dagger$. Then to the leading order, the procedure and result of theorem \ref{thm.worst_tn} still holds.
\end{remark}

\section{Average-Case Perturbation Bounds for Tensor Networks} \label{s.ave}

Using the approach from Section \ref{s.wrst}, we can analyze the average case error caused by relative sitewise perturbations.
In particular, we consider the expected value of the error in the overall state given randomly generated perturbations to tensor network sites.
To do this, we need the aid of the entrywise normalized version of the tensor network defined below. Intuitively, we scale $\tn{T}$ to $\overline{\tn{T}}$ making each entry of node tensors $\tt{j}$ of magnitude 1 on average.

\begin{definition} [Entrywise normalized tensor network]
\label{def:sntn}
The entrywise normalized tensor network is defined as $\overline{\tn{T}} = \sT\left(\tsr{\overline{T}}^{(1)},~\tsr{\overline{T}}^{(2)},~\ldots,~\tsr{\overline{T}}^{(n)}\right)$, where $\tsr{\overline{T}}^{(j)} = \frac{\sqrt{N_j} \, \tt{j}}{\norm{F}{\tt{j}}}$, and $N_j$ is the number of entries in $\tt{j}$.
\end{definition}


\begin{theorem} [Average-case error] \label{thm.ave}
    Let $\tn{T} = \sT(\tt{1},~\tt{2},~\cdots,~\tt{n})$ be a tensor network. Let $\epsilon > 0$. Suppose $\tn{T}$ is perturbed by random perturbations $(\tsr{\delta}^{(i)})_{i = 1}^n$, where entries of $\tsr{\delta}^{(i)}$ are of mean 0 and the same variance such that $\bE\norm{F}{\tsr{\delta}^{(i)}}^2 \leq \epsilon^2 \norm{F}{\tt{i}}^2$ for all $i$, and all entries of $\tsr{\delta}$ are independent. Then 
    \begin{equation} \label{eq.ave_err1}
        \bE\sE_r(\tn{T},\tn{\delta})^2 \leq \epsilon^2 \frac{
            \norm{F}{\mat{M}_{\tsr{\overline{T}}}}^2
        }{
            \norm{F}{\tsr{\overline{T}}}^2
        }
         + \O(\epsilon^3). 
    \end{equation}
    Equality in \eqref{eq.ave_err1} is achieved if $\bE\norm{F}{\tsr{\delta}^{(i)}}^2 = \epsilon^2 \norm{F}{\tt{i}}^2 \FORAL i$. If in fact all entries of $(\tsr{\delta}^{(i)})_{i = 1}^n$ have the same variance $\sigma^2 \ll 1$, then
    \begin{equation} \label{eq.ave_err2}
        \bE\sE_r(\tn{T},\tn{\delta})^2 
        =
        \sigma^2 \frac{
            \norm{F}{\mat{M}_{\tsr{T}}}^2
        }{
            \norm{F}{\tsr{T}}^2
        }
         + \O(\sigma^3).
    \end{equation}
\end{theorem}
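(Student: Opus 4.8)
The plan is to linearize the perturbation through the Jacobian of $\sT$ identified in Lemma~\ref{l.stab}, take the expectation of the squared Frobenius error by exploiting that the entries of $\vcr\delta$ are independent and mean zero, and finally rewrite the resulting block sum in terms of the entrywise normalized network. Writing the concatenated, vectorized perturbation as the long vector $\vcr\delta$, multilinearity of $\sT$ (Proposition~\ref{prop.lin}) gives the exact expansion
\begin{equation}
    \vec(\hat{\tsr T}) - \vec(\tsr T) = \mat{M}_{\tn T}\,\vcr\delta + \Br,
\end{equation}
where $\Br$ gathers all terms of degree at least two in $\vcr\delta$. Because $\sT$ is linear in each site separately, every monomial of $\Br$ involves at least two \emph{distinct} sites, and $\Br = \O(\norm{2}{\vcr\delta}^2)$ in the small-perturbation regime.

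Expanding $\norm{F}{\hat{\tsr T} - \tsr T}^2 = \norm{2}{\mat M_{\tn T}\vcr\delta}^2 + 2\,\mathrm{Re}\langle \mat M_{\tn T}\vcr\delta,\Br\rangle + \norm{2}{\Br}^2$ and taking expectations, I would argue the cross term and $\norm{2}{\Br}^2$ are lower order. The cross term is cubic in the entries of $\vcr\delta$; each cubic monomial couples one site from $\mat M_{\tn T}\vcr\delta$ with two distinct sites from $\Br$, so at least one site appears exactly once, and mean-zero independence forces that monomial's expectation to vanish, leaving a contribution of order $\O(\epsilon^3)$. This reduces the problem to the central moment computation of $\bE\norm{2}{\mat M_{\tn T}\vcr\delta}^2$. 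Writing $\mat M_{\tn T}\vcr\delta = \sum_i \mat M_{\tt i}\vec(\tsr\delta^{(i)})$, cross terms with $i\neq i'$ vanish in expectation by independence and mean zero, while within each site $\bE[\vec(\tsr\delta^{(i)})\vec(\tsr\delta^{(i)})^\dagger] = \sigma_i^2\mat I$ with $\sigma_i^2$ the common per-entry variance. Hence
\begin{equation}
    \bE\norm{2}{\mat M_{\tn T}\vcr\delta}^2 = \sum_{i=1}^n \sigma_i^2\,\tr\!\left(\mat M_{\tt i}^\dagger \mat M_{\tt i}\right) = \sum_{i=1}^n \sigma_i^2\,\norm{F}{\mat M_{\tt i}}^2,
\end{equation}
and substituting $N_i\sigma_i^2 = \bE\norm{F}{\tsr\delta^{(i)}}^2 \leq \epsilon^2\norm{F}{\tt i}^2$ bounds this by $\epsilon^2\sum_i (\norm{F}{\tt i}^2/N_i)\,\norm{F}{\mat M_{\tt i}}^2$, with equality exactly when every variance bound is tight.

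The step I expect to be the main obstacle is identifying this weighted block sum with the clean normalized ratio in \eqref{eq.ave_err1}. Setting the per-site scalars $c_j = \sqrt{N_j}/\norm{F}{\tt j}$, so that $\tsr{\overline{T}}^{(j)} = c_j\,\tt j$, multilinearity yields $\tsr{\overline{T}} = \big(\prod_j c_j\big)\tsr T$, and, because the environment of site $i$ omits $\tt i$, it also yields $\mat M_{\tsr{\overline{T}}^{(i)}} = \big(\prod_{j\neq i} c_j\big)\mat M_{\tt i}$. Substituting these into $\norm{F}{\mat M_{\tsr{\overline{T}}}}^2 = \sum_i \norm{F}{\mat M_{\tsr{\overline{T}}^{(i)}}}^2$ and $\norm{F}{\tsr{\overline{T}}}^2$ and cancelling the common factor $\big(\prod_j c_j\big)^2$ gives
\begin{equation}
    \frac{\norm{F}{\mat M_{\tsr{\overline{T}}}}^2}{\norm{F}{\tsr{\overline{T}}}^2} = \frac{1}{\norm{F}{\tsr T}^2}\sum_{i=1}^n c_i^{-2}\,\norm{F}{\mat M_{\tt i}}^2 = \frac{1}{\norm{F}{\tsr T}^2}\sum_{i=1}^n \frac{\norm{F}{\tt i}^2}{N_i}\,\norm{F}{\mat M_{\tt i}}^2 .
\end{equation}
Dividing the bound on $\bE\norm{2}{\mat M_{\tn T}\vcr\delta}^2$ by $\norm{F}{\tsr T}^2$ and applying this identity yields \eqref{eq.ave_err1}, with equality exactly when all variance bounds are tight. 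For \eqref{eq.ave_err2} I would instead set $\sigma_i^2 = \sigma^2$ for every site directly in the central computation, so that $\bE\norm{2}{\mat M_{\tn T}\vcr\delta}^2 = \sigma^2\sum_i\norm{F}{\mat M_{\tt i}}^2 = \sigma^2\norm{F}{\mat M_{\tsr T}}^2$; dividing by $\norm{F}{\tsr T}^2$ and absorbing the remainder into $\O(\sigma^3)$ gives the stated exact equality, with no inequality invoked. The delicate part throughout is the bookkeeping of the scaling factors $c_j$ through the multilinear structure; the moment computation and the remainder estimate are otherwise routine.
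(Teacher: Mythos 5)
Your proof is correct and follows essentially the same route as the paper: linearize via the environment matrix, use independence and mean-zero to kill cross terms so that $\bE\norm{2}{\mat M_{\tn T}\vcr\delta}^2=\sum_i\sigma_i^2\norm{F}{\mat M_{\tt i}}^2$, and relate this to the entrywise normalized network. The only difference is organizational — the paper rescales the network and perturbation to $\overline{\tn T}$ \emph{before} the moment computation, whereas you compute on the original network and then verify the scaling identity; these are the same argument, and your handling of the remainder term is if anything slightly more explicit than the paper's.
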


\begin{proof}
The expected error is maximized when variance of each entry is maximized, so we focus on showing that
    \begin{equation} \label{eq.ave_err1}
        \bE\sE_r(\tn{T},\tn{\delta})^2 = \epsilon^2 \frac{
            \norm{F}{\mat{M}_{\tsr{\overline{T}}}}^2
        }{
            \norm{F}{\tsr{\overline{T}}}^2
        }
         + \O(\epsilon^3). 
    \end{equation}
when $\bE\norm{F}{\tsr{\delta}^{(i)}}^2 = \epsilon^2 \norm{F}{\tt{i}}^2 \FORAL i$.
We show that the expected error of this perturbation is equivalent to the expected error due to a $\tsr{\overline{\delta}} = (\tsr{\overline{\delta}}^{(i)})_{i = 1}^n$ random sitewise perturbation to the entrywise normalized tensor network $\overline{\tn{T}}$ (definition~\ref{def:sntn}) with independent entries of variance $\epsilon^2$. 
In particular, for all $\tsr{\delta}^{(i)}$, set $\tsr{\overline{\delta}}^{(i)} = \frac{\sqrt{N_i}}{\norm{F}{\tt{i}}} \tsr{\delta}^{(i)}$. 
Since $\bE \norm{F}{\tsr{\delta}^{(i)}}^2 = \epsilon^2 \norm{F}{\tt{i}}^2$, 
we get $\bE \norm{F}{\tsr{\overline{\delta}}^{(i)}}^2 = \epsilon^2 N_i = \epsilon^2 \norm{F}{\tsr{\overline{T}}^{(i)}}^2$, so the variance of each entry in $\tsr{\delta}^{(i)}$, for each $i$, is $\epsilon^2$.
Since $\overline{\tn{T}}$ is obtained by scaling each input $\tt{i}$ of $\tn{T}$ by $\frac{\sqrt{N_i}}{\norm{F}{\tt{i}}}$, and $\tsr{\overline{\delta}}$ is also obtained by scaling each node $\tsr{\delta}^{(i)}$ of $\tsr{\delta}$ in the same way, we see by proposition \ref{prop.lin} that $\sE_r(\tsr{T},\tn{\delta}) = \sE_r(\tsr{\overline{T}}, \tsr{\overline{\delta}})$.

It now suffices to consider the entrywise normalized tensor network perturbation.
Treat the perturbation $\tn{\overline{\delta}}=(\tsr{\bar{\delta}}^{(i)})_{i = 1}^n$ as a long vector $\vcr{\tsr{\overline{\delta}}}$. 
We have (via \eqref{eq:norm_pert}),
\begin{equation} \label{eq.exp_comp}
    \begin{aligned}
    \bE \sE_a(\overline{\tn{T}}, \overline{\tn{\delta}})^2 
    &= 
    \bE \left[\norm{2}{\mat{M}_{\tsr{\overline{T}}} \vcr{\tsr{\overline{\delta}}}} + \O(\norm{2}{\vcr{\tsr{\overline{\delta}}}}^2)\right]^2 
    = 
    \bE \norm{2}{\mat{M}_{\tsr{\overline{T}}} \vcr{\tsr{\overline{\delta}}}}^2 + \O(\norm{2}{\vcr{\tsr{\overline{\delta}}}}^3) \\
    &=
    \sum_{i = 1}^n \bE \norm{F}{\mat{M}_{\tsr{\overline{T}}^{(i)}} \vcr{\tsr{\overline{\delta}}}^{(i)}}^2 + \O(\epsilon^3) 
    =
    \epsilon^2 \cdot \sum_{i = 1}^n \norm{F}{\mat{M}_{\tsr{\overline{T}}^{(i)}}}^2 + \O(\epsilon^3) \\
    &= 
    \epsilon^2 \cdot \norm{F}{\mat{M}_{\tsr{\overline{T}}}}^2 + \O(\epsilon^3),
    \end{aligned}
\end{equation}
where in the second line we used independence between entries of $\tsr{\overline{\delta}}$ and the fact that their mean is 0. 
Thus $\bE \sE_r(\tn{T},\tn{\delta})^2 = \bE \sE_r(\overline{\tn{T}}, \tsr{\overline{\delta}})^2 = \epsilon^2 \frac{\norm{F}{\mat{M}_{\tsr{\overline{T}}}}^2}{\norm{F}{\tsr{\overline{T}}}^2} + \O(\epsilon^3)$. 
When the variance is uniformly bounded by $\sigma^2$, then the computation of \eqref{eq.exp_comp} gives \eqref{eq.ave_err2} without the need to rescale $\tn{T}$ and $\tsr{\delta}$.
\end{proof}

\begin{remark}
It is possible to give a concentration of the distribution of $\sE_r(\tn{T},\tn{\delta})$ given the information of distribution of $\tsr{\delta}$. 
For example if each entry of $\tsr{\delta}$ is sub-Gaussian with variance parameter $v$, as in the case of bounded perturbation or Gaussian perturbation, 
then an application of Hanson-Wright type inequality shows that $\sE_a(\tn{T}, \tn{\delta})$ is also sub-Gaussian with variance parameter a constant multiple of $v\norm{2}{\mat{M}_{\tsr{T}}}^2$. 
See theorem 2.1 in \cite{rudelson2013HWineq} for detail. 
\end{remark}





\section{Error in Tensor Network Methods} \label{s.stab_met}

The perturbation bounds in Section~\ref{s.wrst} can describe the affect of the choice of tensor network gauge on the worst-case error amplification in tensor network methods.
For attainable accuracy in optimization methods, we bound the worst-case amplification truncation error in the representation of exact solutions by the condition number of the tensor network using all-site perturbation bounds.
These bounds can also be extended to use the average case analysis in Section~\ref{s.ave}.
Additionally, we describe how our theory can be applied to simulation of quantum systems via time-evolution of tensor networks by application of local operators.

\subsection{Attainable Accuracy in Optimization} \label{ss.dmrg}

The choice of gauge (tensor network representation) can bound the attainable accuracy in optimization procedures, such as DMRG for the eigenvalue problem~\cite{RevModPhys.77.259,white1992density,white1993density} or solutions to linear systems via alternating least squares~\cite{holtz2012alternating,dolgov2014alternating,dolgov2013tt,oseledets2012solution,yu2017finding,khemani2016obtaining,tobler2012low,oseledets2011dmrg}.
We approximately bound the attainable accuracy by considering the effect of sitewise truncation on any particular gauge representing the exact eigenstate.
In particular, we consider a truncation procedure that discards singular values of $\tsr{T}^{(i)}$ sequentially for $i = 1,2,\ldots,n$, up to an error of approximately $\varepsilon_i \|\tsr{T}^{(i)}\|_F$ for some $0 \leq \varepsilon_i \ll 1$.
\begin{corollary} [Worst case attainable accuracy] \label{thm:attain_acc}
Consider a reference state $\vcr x$.
After truncation of each site to relative accuracy $\varepsilon_i\leq \varepsilon$ of a tensor network representing $\vcr x$, the worst-case attainable accuracy for the best possible gauge is
\[\frac{\|\vcr{\hat{x}} - \vcr{x}\|_2}{\|\vcr x\|_2} =  \min_{\substack{\tsr{T}^{(1)},  \tsr{T}^{(2)},  \ldots, \tsr{T}^{(n)}, \\
\vcr x  = \vec(\sT(\tsr{T}^{(1)}, ~ \tsr{T}^{(2)}, ~ \ldots ~ , \tsr{T}^{(n)}))}} \sum_{i=1}^n  \varepsilon_i \frac{\norm{2}{\mat{M}_{\tsr{T}^{(i)}}} \cdot \norm{F}{\tsr{T}^{(i)}}} {\norm{2}{\vcr x}} + \O(\varepsilon^2).\]  
\end{corollary}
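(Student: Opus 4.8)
The plan is to read the corollary as a direct application of the worst-case bound of Theorem~\ref{thm.worst_tn} to an arbitrary but fixed gauge, followed by an outer minimization over all gauges representing the fixed reference state $\vcr x$. First I would fix one representation $\tsr{T}^{(1)},\ldots,\tsr{T}^{(n)}$ with $\vcr x = \vec(\sT(\tsr{T}^{(1)},\ldots,\tsr{T}^{(n)}))$ and observe that the sequential truncation discarding singular values of each site to relative accuracy $\varepsilon_i$ is exactly a sitewise perturbation $(\tsr{\delta}^{(i)})_{i=1}^n$ with $\norm{F}{\tsr{\delta}^{(i)}} \leq \varepsilon_i \norm{F}{\tsr{T}^{(i)}}$. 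Hence it falls into the framework of Theorem~\ref{thm.worst_tn} upon setting $\epsilon_i = \varepsilon_i \norm{F}{\tsr{T}^{(i)}}$.

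Next I would invoke the explicit bound \eqref{eq.upper_bnd}: for this fixed gauge, the worst-case absolute error over all truncations obeying the accuracy constraint is $\sum_{i=1}^n \varepsilon_i \norm{F}{\tsr{T}^{(i)}} \norm{2}{\mat{M}_{\tsr{T}^{(i)}}} + \O(\varepsilon^2)$. Since $\vcr x = \vec(\tsr{T})$ and $\vec$ is an isometry for the $2$/Frobenius norms, one has $\norm{F}{\hat{\tsr{T}} - \tsr{T}} = \norm{2}{\vcr{\hat x} - \vcr x}$ and $\norm{F}{\tsr{T}} = \norm{2}{\vcr x}$, so dividing the worst-case absolute error by $\norm{2}{\vcr x}$ produces the relative attainable accuracy attached to this particular gauge. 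Because the reference state $\vcr x$ does not fix its gauge, I would then minimize this per-gauge relative error over the constraint set of all representations satisfying $\vcr x = \vec(\sT(\tsr{T}^{(1)},\ldots,\tsr{T}^{(n)}))$, which is exactly the right-hand side of the asserted identity; the leading-order terms combine linearly and the $\O(\varepsilon^2)$ remainder is uniform over the relevant perturbation directions, so the minimization commutes with discarding the higher-order term.

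The main obstacle is the equality sign rather than $\leq$. Theorem~\ref{thm.worst_tn} only certifies \eqref{eq.upper_bnd} to be uniformly tight when $\tsr{T}$ contracts to a scalar, whereas here $\vcr x$ is a genuine state, so the first inequality in \eqref{eq.proof_of_worst} need not be an equality: attaining the sum requires the worst-case truncation directions to align each $\mat{M}_{\tsr{T}^{(i)}}$ with a common dominant left singular vector. I would therefore either argue that, to leading order, the discarded singular subspaces at distinct sites can be chosen so as to realize this alignment, or else read the left-hand side as the worst-case \emph{bound} (the standard field measure) rather than the exact worst-case error. A secondary subtlety is whether the infimum over the non-compact gauge variety is attained; here I would lean on Corollary~\ref{cor.one_site}, which shows that a canonical form centered at the perturbed site minimizes the single-site contribution, to exhibit a concrete near-minimizing gauge, or state the result with an infimum if attainment cannot be guaranteed in full generality.
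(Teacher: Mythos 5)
Your proposal follows essentially the same route as the paper: treat the sequential truncations as an $\varepsilon_i$-relative sitewise perturbation, apply the explicit bound \eqref{eq.upper_bnd} of Theorem~\ref{thm.worst_tn} for a fixed gauge, divide by $\norm{2}{\vcr x}$, and minimize over all representations of $\vcr x$. Your added caveat about the equality sign is well taken --- the paper's own proof only establishes the upper bound and silently reads the statement as the worst-case \emph{bound} rather than an attained supremum --- so your reading is, if anything, more careful than the original.
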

\begin{proof}
For any $\sT(\tsr{T}^{(1)}, ~ \tsr{T}^{(2)}, ~ \ldots ~ , \tsr{T}^{(n)})$, the truncations yield perturbations $\tsr{\delta}^{(1)},\ldots,\tsr{\delta}^{(n)}$ with $\|\tsr{\delta}^{(i)}\|_F \leq \varepsilon\|\tsr{T}^{(i)}\|_F$.
Consequently, we can apply theorem~\ref{thm.worst_tn} to obtain the worst case absolute error,
    \begin{equation}
        \sE_a(\T, \delta) \leq \sum_{i = 1}^n \varepsilon_i \norm{F}{\tsr{T}^{(i)}} \norm{2}{\mat{M}_{\tsr{T}^{(i)}}} + \O(\varepsilon^2).
    \end{equation}
The relative error bound follows by dividing by $\norm{2}{\vcr x}$ and minimizing overall possible gauges.
\end{proof}
This attainable accuracy bound includes all possible canonical forms for tensor networks representing $\vcr x$.
However, a lower error is attainable by transforming the tensor network to a canonical form centered at the $i$th site before truncation of that site, as done in DMRG.
The following theorem bounds the error in this scenario.
\begin{theorem} [Worst case attainable accuracy with canonicalization] \label{thm:attain_acc_can}
Consider a reference state $\vcr x$ and any tensor network representation thereof, $\tsr{T}^{(1)},~ \tsr{T}^{(2)},~\ldots,~\tsr{T}^{(n)}$, such that $\vcr x  = \vec(\sT(\tsr{T}^{(1)}, ~ \tsr{T}^{(2)}, ~ \ldots ~ , \tsr{T}^{(n)}))$.
If we truncate each site to relative accuracy $\varepsilon_i\leq \varepsilon$ after putting the tensor network into a canonical form with that site is the center, we obtain  $\vcr{\hat{x}}=\vec(\sT(\tsr C^{(1)}, ~ \tsr C^{(2)}, ~ \ldots ~ , \tsr{C}^{(n)}))$ with error,
\[\frac{\|\vcr{\hat{x}} - \vcr{x}\|_2}{\|\vcr x\|_2} \leq  n\varepsilon + \O(\varepsilon^2)\]
\end{theorem}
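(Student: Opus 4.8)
The plan is to telescope the sequential procedure into $n$ single-site truncation steps, bound each step using the isometry property of the canonical form, and accumulate via the triangle inequality. Define intermediate states $\vcr{x}_0 = \vcr{x}$ and, for $i = 1, \ldots, n$, let $\vcr{x}_i$ be the state obtained from $\vcr{x}_{i-1}$ by first canonicalizing its representation with site $i$ as the center and then truncating that site to relative accuracy $\varepsilon_i$. Canonicalization is a pure gauge change and leaves the represented state fixed, so $\vcr{x}_i$ differs from $\vcr{x}_{i-1}$ only through a perturbation $\tsr{\delta}^{(i)}$ on the center tensor $\tsr{C}^{(i)}$ of $\vcr{x}_{i-1}$, with $\norm{F}{\tsr{\delta}^{(i)}} \leq \varepsilon_i \norm{F}{\tsr{C}^{(i)}}$.

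The crucial observation is that a single-site perturbation acts exactly linearly: since $\sT$ is multilinear (Proposition~\ref{prop.lin}), fixing all other sites gives $\vcr{x}_i - \vcr{x}_{i-1} = \mat{M}_{\tsr{C}^{(i)}} \vec(\tsr{\delta}^{(i)})$ with no higher-order remainder. In the canonical form centered at site $i$ the environment matrix $\mat{M}_{\tsr{C}^{(i)}}$ is an isometry (Definition~\ref{def.canon}), so $\norm{2}{\mat{M}_{\tsr{C}^{(i)}}} = 1$ and, through the representation \eqref{eq.matvec_repr}, the isometry also preserves norms, giving $\norm{F}{\tsr{C}^{(i)}} = \norm{2}{\vcr{x}_{i-1}}$. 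Combining these,
\begin{equation}
    \norm{2}{\vcr{x}_i - \vcr{x}_{i-1}} \leq \norm{2}{\mat{M}_{\tsr{C}^{(i)}}} \norm{F}{\tsr{\delta}^{(i)}} \leq \varepsilon_i \norm{F}{\tsr{C}^{(i)}} = \varepsilon_i \norm{2}{\vcr{x}_{i-1}}.
\end{equation}
This is exactly the collapse promised by Corollary~\ref{cor.one_site}: the general factor $\norm{2}{\mat{M}}\norm{F}{\tsr{T}^{(i)}}/\norm{2}{\vcr{x}}$ becomes $1$ in a canonical form, leaving a per-step relative error of merely $\varepsilon_i$.

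To assemble the total error I would telescope with the triangle inequality, $\norm{2}{\vcr{\hat{x}} - \vcr{x}} \leq \sum_{i=1}^n \norm{2}{\vcr{x}_i - \vcr{x}_{i-1}} \leq \sum_{i=1}^n \varepsilon_i \norm{2}{\vcr{x}_{i-1}}$, and then control the norm drift of the intermediates. Since each step scales the norm by at most $1 + \varepsilon_i$, we have $\norm{2}{\vcr{x}_{i-1}} \leq \norm{2}{\vcr{x}} \prod_{j < i}(1 + \varepsilon_j) \leq \norm{2}{\vcr{x}}(1+\varepsilon)^{n-1} = \norm{2}{\vcr{x}}(1 + \O(\varepsilon))$ for fixed $n$. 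Substituting and using $\varepsilon_i \leq \varepsilon$ gives $\norm{2}{\vcr{\hat{x}} - \vcr{x}} \leq \big(\sum_{i=1}^n \varepsilon_i\big)\norm{2}{\vcr{x}}(1 + \O(\varepsilon)) \leq n\varepsilon\,\norm{2}{\vcr{x}} + \O(\varepsilon^2)$, which is the claim after dividing by $\norm{2}{\vcr{x}}$.

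Notably, because each single-site step is exactly linear, no per-step second-order term appears; the entire $\O(\varepsilon^2)$ originates solely from the accumulated norm drift of the intermediate states. The main obstacle is therefore purely bookkeeping: I must measure each step's error against the current state norm $\norm{2}{\vcr{x}_{i-1}}$ rather than the original $\norm{2}{\vcr{x}}$, and confirm that the product $\prod_{j<i}(1+\varepsilon_j)$ stays within $1 + \O(\varepsilon)$ for fixed $n$ so that all cross terms collapse into $\O(\varepsilon^2)$. A secondary check is that the isometry identity $\norm{F}{\tsr{C}^{(i)}} = \norm{2}{\vcr{x}_{i-1}}$ is applied to the canonical center recomputed for the current (already perturbed) state at each stage, not to a tensor inherited from an earlier gauge.
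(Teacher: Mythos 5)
Your proof is correct and follows essentially the same route as the paper's: telescope the $n$ sequential canonicalize-then-truncate steps, bound each via the single-site result (Corollary~\ref{cor.one_site}) where the canonical form collapses the amplification factor to $1$, and absorb the drift of the intermediate norms into the $\O(\varepsilon^2)$ term. You are somewhat more explicit than the paper about why each step is exactly linear and about the $(1+\varepsilon)^{n-1}$ bookkeeping, but the argument is the same.
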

\begin{proof}
Let $\vcr x^{(i)}$ be the state after the $i$th truncation, so that $\vcr x^{(0)}=\vcr x$ and $\vcr x^{(n)} = \vcr{\hat{x}}$.
We can make use of corollary~\ref{cor.one_site} to bound the relative error of each truncation by
\[\norm{2}{\vcr x^{(i+1)} - \vcr x^{(i)}} \leq \varepsilon_i\norm{2}{\vcr x^{(i)}}.\]
Further, $\vcr{\hat{x}} - \vcr{x} = \sum_{i=0}^{n-1}\vcr x^{(i+1)} - \vcr x^{(i)}$, so 
$\norm{2}{\vcr{\hat{x}} - \vcr{x}} \leq \sum_{i=1}^n\varepsilon_i\norm{2}{\vcr x^{(i)}} \leq n\varepsilon \norm{2}{\vcr x} + \O(\varepsilon^2)$.
\end{proof}


Given a perturbed normalized eigenvector $\hat{\vcr x} = \vcr x + \vcr \delta$, we can bound the error in the energy (eigenvalue) $E$ of Hamiltonian $\mat H$.
Let $\vcr \nu$ be the component of $\vcr \delta$ that is perpendicular to $\vcr x$, so $\|\vcr \nu\|_2\leq \|\vcr \delta\|_2$ and $\langle \vcr x,\vcr{\nu}\rangle=0$, then
\[\langle \vcr{\hat{x}}, \mat H \vcr{\hat{x}}\rangle = (1-\|\vcr{\nu}\|_2^2)E + \langle \vcr{\nu}, \mat H\vcr{\nu}\rangle.\]
Consequently, the error scales quadratically with the magnitude of the perturbation,
\[|\hat{E}-E|\leq \|\vcr \nu\|_2^2|E| + \|\mat H\|_2\|\vcr \nu\|^2 \leq\|\vcr \delta\|_2^2|E| + \|\mat H\|_2\|\vcr \delta\|^2.\]

\subsection{Tensor Network Time Evolution} \label{ss.qcirc}


Many tensor network methods can be described by application of a series of sequence of operators that act on nearby sites in a tensor network and the result is approximated to some accuracy via truncation of bonds between those sites.
This approach can be used for simulation of quantum circuits~\cite{markov2008simulating,2017arXiv171005867P,jozsa2006simulation,schuch2007computational,guo2019general}.
Further, in quantum simulation, tensor network time-evolution methods~\cite{orus2014practical,lubasch2014algorithms,lubasch2014unifying,haegeman2016unifying,czarnik2012projected,garcia2006time,orus2008infinite,vidal2007classical} are used to approximately compute $\vcr{x}(t) = e^{t\mat{A}}\vcr{x}(0)$ where the state $\vcr{x}(t)$ is represented by a tensor networks $\vcr{x}(t_i) \approx \vec(\sT(\tsr{T}^{(1)}_{i},\ldots,\tsr{T}^{(n)}_{i})$ where $t_i=(i-1)\tau$.
Each time step $\vcr{x}(t_{i+1}) = e^{\tau \mat A}\vcr{x}(t_i)$ is performed by a Trotter expansion of the time-evolution operator
\[e^{\tau \mat A}= e^{\tau \mat{A}^{(1)}}\cdots e^{\tau \mat{A}^{(N)}} + O(\tau^2),\]
where $\mat{A}=\sum_i \mat{A}^{(i)}$ and each $\mat{A}^{(i)}$ is a local operator.
Thus, a substep of the time-evolution is to apply each $e^{\tau \mat A^{(i)}}$ to the current state, which may involve updating a neighborhood of sites in a tensor network and truncating.
Such time-evolution methods with tensor networks are used to simulate dynamics of quantum systems described by Hamiltonian $\mat H=i\mat A$~\cite{orus2014practical} as well as to perform imaginary time evolution~\cite{haegeman2016unifying,czarnik2012projected,orus2008infinite,orus2014practical} to compute the dominant eigenvector of $\mat A$ (used to get the ground-state of Hamiltonian $\mat{H} = -\mat A$).

The error of each time-evolution substep corresponds to a single-site or multi-site perturbation and its amplification would be bounded via corollary~\ref{cor.one_site} (with $\varepsilon_i>0$ for the tensors updated) and minimized when the tensor network is a canonical form with the center corresponding to the site or sites being approximately updated.
However, it can be costly to move the center each time we apply a local operator.
As a solution, one could try to find an optimal order to apply the gates such that the movement cost is minimized or acceptable. 

%


\section{Error Bounds for 1D and 2D Tensor Networks} \label{s.appl}


We now apply the results in section \ref{s.wrst} to two widely used tensor networks -- MPS (1D) and PEPS (2D), as shown in Figure \ref{fig.mpspeps}(a) and (b) below. 
The uncontracted legs, as colored in red in Figure \ref{fig.mpspeps}(a) and \ref{fig.mpspeps}(b), are referred to as the \textit{physical legs} or \textit{outgoing legs}.
In the context of tensor decompositions, the MPS tensor network is often referred to as tensor train~\cite{oseledets2011tensor}.
\begin{figure}[h]
    \centering
    \minipage{0.50\textwidth}
      \centering
      \includegraphics[width=\textwidth, height = 4.5cm]{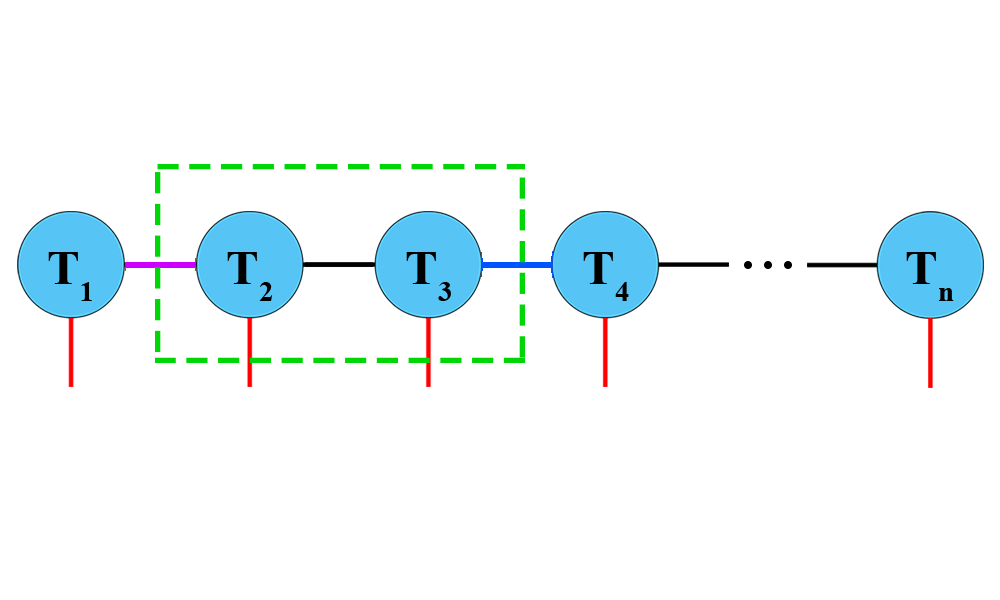}
      \caption*{\smaller{Fig.\ref{fig.mpspeps}(a). MPS with $n$ nodes}}
    \endminipage\hfill
    \minipage{0.50\textwidth}%
      \centering
      \includegraphics[width=\textwidth, height = 4.5cm]{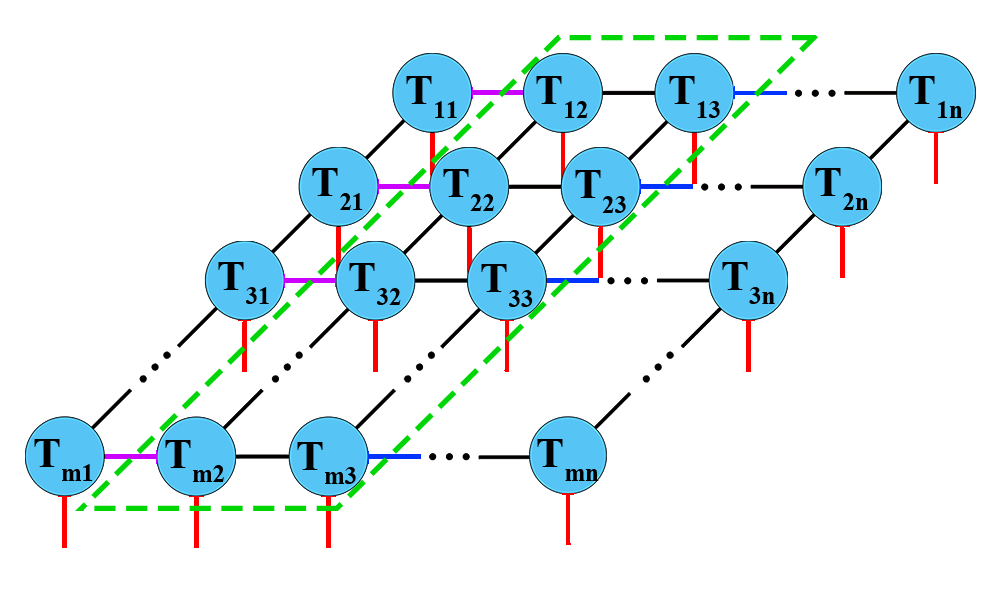}
      \caption*{\smaller{Fig.\ref{fig.mpspeps}(b). PEPS with $m \times n$ nodes}}
      \hfill
    \endminipage
    \captionlistentry{}
    \label{fig.mpspeps}
\end{figure}

We will derive worst-case tight bounds on sitewise perturbation errors for general and canonical MPS/PEPS. 
In addition, we also stress that tightness of the bounds also holds under the constraint that the tensors have a full rank matricization,
which is a realistic situation in practical computations and simulations. 
Since the bounds are tight, we can then compare them and conclude whether converting to canonical forms helps to improve stability. 

For these structured tensor networks, a simpler notation is available. 
With an MPS, we can still use the multiplication notation for tensor contractions, namely $\tsr A \tsr B$ means contracting the joint leg between $\tsr A \AND \tsr B$. 
For an MPS $(G,\tt{1},\, \tt{2},\, \cdots \, , \tt{n})$, we will use the notation $\tn{T}^{[m,\, n]} := (S[G, \{v_m,\cdots, v_n\}], \tt{m}, \tt{m+1}, \cdots, \tt{n})$, for $m \leq n$, and $\tn{T}^{[m,\, n]} = \mat I$ for $m > n$ (the dimension does not matter, as we will only use its 2-norm). 
This convention simplifies the expressions in our analysis.
We use $\mat{T}^{[m,n]}_\rightarrow$ to denote the matricization of $\tsr{T}^{[m,n]}$,
with the right-going leg of node $\tt{n}$ being column index (to be contracted) and others being row index.
Similarly, we also introduce the matricization $\mat{T}^{[m, n]}_\leftarrow$, where the column (contracted) index is the left-going leg of $\tt m$.
As an example, $\tn{T}^{[2, 3]}$ is boxed in Figure \ref{fig.mpspeps}(a), and $\mat{T}^{[2, 3]}_\rightarrow$ corresponds to the matricization of $\tsr{T}^{[2, 3]}$ with the red leg being the column leg, and others being the row legs. 
Similarly, the column leg for $\mat{T}^{[2, 3]}_\leftarrow$ is the purple leg in Figure \ref{fig.mpspeps}(a). 

For PEPS, we index node tensors at $i$th row and $j$th column by $\tsr T^{(i,j)}$ (with (1, 1) on the upper left corner, see Figure \ref{fig.mpspeps}(b)), 
use $\tn{T}^{(\cdot, j)}$ and $\tn{T}^{(i,\cdot)}$ to denote the $j$th column and $i$th row of PEPS, 
We denote $\mat{T}^{(\cdot, [p,q])}_\rightarrow$ to be the matricization of $\tsr{T}^{(\cdot, [p,q])}$ with all right-going legs in $\tsr{T}^{(\cdot, q)}$ being column (contracted) indices. 
Similarly we use arrows pointing left, up, and down to specify the orientation of matricization of a rectangular block of PEPS.
As an example, in Figure \ref{fig.mpspeps}(b), $\tsr{T}^{(\cdot,[2,3])}$ is boxed in green, and column legs for $\tsr{T}^{(\cdot,[2,3])}_\rightarrow$ is colored in red.

We discuss MPS and PEPS separately in Section \ref{ss.mps} and \ref{ss.peps}, and put special focus on stability of canonical forms and applications to DMRG-like algorithms and quantum circuit simulation in Sections \ref{ss.dmrg} and \ref{ss.qcirc}.

\subsection{Stability and error analysis of MPS} \label{ss.mps}
Now we compute the worst-case error bounds for MPS with respect to relative sitewise perturbations. We derive bounds for both general MPS and canonical MPS, prove these two bounds together, and then compare them.

\begin{corollary}[Single-site perturbation]
    Let $\tn{T} = (G, \tt{1}, ~ \tt{2}, ~ \cdots ~ , \tt{n})$ be an MPS. Suppose $\tn{T}$ is perturbed only at node $\tt j$ by $\tsr{\delta}^{(j)}$, 
    with $\norm{F}{\tsr{\delta}^{(j)}} \leq \epsilon \norm{F}{\tt{j}}$. Let the perturbed version be $\hat{\tn{T}}$. Then
    \begin{equation} \label{eq.mps_worst}
      \frac{\norm{F}{\tsr{T} - \hat{\tsr{T}}}}{\norm{F}{\tsr{T}}} \leq \epsilon\cdot \frac{
      \bnorm{2}{\mat{T}^{[1, j-1]}_\rightarrow} \cdot \norm{F}{\tt{j}} \cdot \bnorm{2}{\mat{T}^{[j+1, n]}_\leftarrow}
      }{
      \norm{F}{\tsr{T}}
      }  
      + \O(\epsilon^2).  
    \end{equation}
    If $\tn{T}$ is in a canonical form centered at $\tt{j}$, then 
    \begin{equation} \label{eq.mps_worst}
      \frac{\norm{F}{\tsr{T} - \hat{\tsr{T}}}}{\norm{F}{\tsr{T}}} \leq \epsilon 
      + \O(\epsilon^2).  
    \end{equation}
    Moreover, the bound are tight. Therefore canonical form reduces the worst-case error for single-site perturbation. 
\end{corollary}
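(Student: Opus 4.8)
The plan is to apply Corollary~\ref{cor.one_site} with the perturbed site taken to be $\tt{j}$ in place of $\tt{1}$, and then to reduce the abstract factor $\norm{2}{\mat{M}_{\tt{j}}}$ appearing there to the concrete MPS block norms in the statement. All the real content of the single-site bound is therefore the identity
\[
\norm{2}{\mat{M}_{\tt{j}}} = \bnorm{2}{\mat{T}^{[1,j-1]}_\rightarrow}\cdot\bnorm{2}{\mat{T}^{[j+1,n]}_\leftarrow},
\]
which I would establish by examining the structure of the environment of $\tt{j}$ in an MPS.

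First I would describe the environment tensor of $\tt{j}$. Deleting $\tt{j}$ from the chain leaves two disconnected blocks, the left block $\tsr{T}^{[1,j-1]}$ and the right block $\tsr{T}^{[j+1,n]}$, which share no indices; the only legs they expose toward $\tt{j}$ are the left bond $\ell$ and the right bond $r$. Hence the environment tensor $\E_{\tt{j}}$ factors entrywise as a product of an entry of $\tsr{T}^{[1,j-1]}$ (depending on $\ell$ and the left physical legs) with an entry of $\tsr{T}^{[j+1,n]}$ (depending on $r$ and the right physical legs). Matricizing as in Definition~\ref{def.sub_tns_env_mat}, with the bonds $\{\ell,r\}$ as column indices and the remaining physical legs as row indices, this factorization becomes precisely a Kronecker product $\mat{N}_H = \mat{T}^{[1,j-1]}_\rightarrow \otimes \mat{T}^{[j+1,n]}_\leftarrow$, up to a permutation of rows and columns that reorders the physical indices. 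By Definition~\ref{def.sub_tns_env_mat}, $\mat{M}_{\tt{j}} = \mat{N}_H \otimes \mat{I}_s$, where $\mat{I}_s$ is the identity on the physical leg of $\tt{j}$.

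Next I would invoke two elementary facts: the spectral norm is invariant under row and column permutations, and $\norm{2}{\mat{A} \otimes \mat{B}} = \norm{2}{\mat{A}}\,\norm{2}{\mat{B}}$, with $\norm{2}{\mat{I}_s}=1$. These give the displayed identity for $\norm{2}{\mat{M}_{\tt{j}}}$, and substituting it into Corollary~\ref{cor.one_site} yields the general MPS bound. For the canonical case, Definition~\ref{def.canon} says $\mat{M}_{\tt{j}}$ is an isometry, so $\norm{2}{\mat{M}_{\tt{j}}}=1$ and moreover $\norm{F}{\tsr{T}} = \norm{2}{\mat{M}_{\tt{j}}\vec(\tt{j})} = \norm{F}{\tt{j}}$ by \eqref{eq.matvec_repr}; the whole prefactor collapses to $1$ and the bound becomes $\epsilon + \O(\epsilon^2)$. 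Tightness is inherited from the uniform tightness in Corollary~\ref{cor.one_site}: since only one site is perturbed there is a single environment block, so the extremal perturbation is simply the reshaping of a top right singular vector of $\mat{M}_{\tt{j}}$, scaled to $\norm{F}{\tt{j}}=\epsilon\norm{F}{\tt{j}}$; in the canonical case every singular value equals $1$, so the bound $\epsilon$ is attained by every perturbation. Finally, to conclude that canonicalization helps, I would observe that $\norm{F}{\tsr{T}} = \norm{2}{\mat{M}_{\tt{j}}\vec(\tt{j})} \leq \norm{2}{\mat{M}_{\tt{j}}}\,\norm{F}{\tt{j}}$ always holds, so the prefactor of the general bound is at least $1$, with equality exactly in the isometric (canonical) situation.

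The step requiring the most care is the identification of $\mat{N}_H$ with $\mat{T}^{[1,j-1]}_\rightarrow \otimes \mat{T}^{[j+1,n]}_\leftarrow$: one must track exactly which legs are grouped into rows versus columns in each matricization and verify that the discrepancy between the two orderings is a genuine permutation, so that the spectral norm is preserved. The remaining subtlety is the claimed tightness under the additional requirement that the node matricizations have full rank. Since the optimal perturbation direction is determined by the environment and is independent of $\tt{j}$ itself, and since full-rank matricizations form an open dense set, an arbitrarily small perturbation attaining the bound can always be chosen without leaving that set; hence the full-rank constraint does not weaken tightness.
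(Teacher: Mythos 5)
Your proposal is correct and follows essentially the same route as the paper: apply Corollary~\ref{cor.one_site} and identify the environment matrix of $\tt{j}$ as the Kronecker product $\mat{T}^{[1,j-1]}_\rightarrow \otimes \mat{T}^{[j+1,n]}_\leftarrow$ (tensored with the identity on the physical leg), whose spectral norm factors multiplicatively. The paper states this identification in one line, whereas you carefully justify the leg bookkeeping, the canonical-case collapse of the prefactor to $1$, and tightness; these added details are consistent with the paper's argument.
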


\begin{proof}
    Apply corollary \ref{cor.one_site} to MPS $\tn{T}$. Note that the environment matrix for $\tt{j}$ is $\mat{T}^{[1, j-1]}_\rightarrow \otimes \mat{T}^{[j+1, n]}_\leftarrow$ when $j$ is not 1 or $n$. When $j$ is 1 or $n$, remove the left and right part in the tensor product accordingly.
\end{proof}

\begin{corollary} [All-site perturbation for general MPS] \label{cor.mps_worst}
Let $\tn{T} = (G,\tt{1}, ~ \tt{2}, ~ \cdots ~ , \tt{n})$ be an arbitrary MPS. Suppose $(\tsr{\delta}^{(i)})_{i = 1}^n$ is an $\epsilon$-perturbation to $\tn{T}$. Then
\begin{equation} \label{eq.mps_worst}
  \sE_r(\tn{T}, \delta) \leq \epsilon\cdot \sum_{j = 1}^n \frac{
  \bnorm{2}{\mat{T}^{[1, j-1]}_\rightarrow} \cdot \norm{F}{\tt{j}} \cdot \bnorm{2}{\mat{T}^{[j+1, n]}_\leftarrow}
  }{
  \norm{F}{\tsr{T}}
  }  
  + \O(\epsilon^2).  
\end{equation}
Moreover, the bound is tight.
\end{corollary}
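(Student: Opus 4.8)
The plan is to specialize the general all-site worst-case bound \eqref{eq.upper_bnd} to the MPS geometry and then to certify tightness with a product state network. The corollary is essentially a computation of the spectral norms of the per-site environment matrices, together with a continuity argument for the full-rank refinement.

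First I would apply the worst-case absolute error bound \eqref{eq.upper_bnd} with the relative scaling $\epsilon_j = \epsilon\norm{F}{\tt{j}}$ forced by the $\epsilon$-perturbation hypothesis, and divide through by $\norm{F}{\tsr{T}}$. This reduces the statement to
\[
    \sE_r(\tn{T},\delta) \leq \epsilon \sum_{j=1}^n \frac{\norm{F}{\tt{j}}\,\norm{2}{\mat{M}_{\tt{j}}}}{\norm{F}{\tsr{T}}} + \O(\epsilon^2),
\]
so that the only remaining task is to evaluate $\norm{2}{\mat{M}_{\tt{j}}}$ for an MPS.

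Second, for an interior site $\tt{j}$ the sub-network splits into a left block contracting to $\mat{T}^{[1,j-1]}_\rightarrow$ and a right block contracting to $\mat{T}^{[j+1,n]}_\leftarrow$, while the physical leg of $\tt{j}$ contributes an identity factor in the environment matrix by Definition~\ref{def.sub_tns_env_mat}. Thus $\mat{M}_{\tt{j}}$ is, up to a permutation of its modes, the Kronecker product $\mat{T}^{[1,j-1]}_\rightarrow \otimes \mat{I} \otimes \mat{T}^{[j+1,n]}_\leftarrow$. The key step is then the multiplicativity of the spectral norm under Kronecker products together with $\norm{2}{\mat{I}} = 1$, giving
\[
    \norm{2}{\mat{M}_{\tt{j}}} = \bnorm{2}{\mat{T}^{[1,j-1]}_\rightarrow} \cdot \bnorm{2}{\mat{T}^{[j+1,n]}_\leftarrow}.
\]
The boundary cases $j=1$ and $j=n$ drop the absent factor, consistent with the convention $\tn{T}^{[m,n]}=\mat I$ for $m>n$. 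Substituting this into the previous display yields the asserted bound.

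Third, for tightness I would reuse the final observation in the proof of the general worst-case theorem: on a product state network every $\mat{M}_{\tt{j}}$ has rank one with left singular direction aligned with $\vec(\tsr{T})$, so the triangle inequality in \eqref{eq.proof_of_worst} becomes an equality and the perturbation $\tsr{\delta}^{(j)} = \epsilon\,\tt{j}$ saturates the bound; since a product state network of the given chain graph is a valid MPS, the bound is attained within the class. The one genuinely delicate point is the stronger tightness claim under the full-rank matricization constraint, since a product state has rank-one matricizations and so fails that constraint outright. I would resolve this by a density-plus-continuity argument, noting that full-rank matricizations form a generic (open and dense) condition and that every quantity in the bound, as well as the realized error $\sE_r$, varies continuously with the node tensors; hence the supremum of $\sE_r/\epsilon$ over full-rank MPS equals its value at the limiting product state. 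Making this last argument quantitative --- perturbing each node to full rank while displacing all the norms by an arbitrarily small amount --- is the only place where real work is needed.
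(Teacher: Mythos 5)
Your proposal follows essentially the same route as the paper: apply the general worst-case bound \eqref{eq.upper_bnd} with $\epsilon_j=\epsilon\norm{F}{\tt{j}}$, identify $\mat{M}_{\tt{j}}$ as the Kronecker product $\mat{T}^{[1,j-1]}_\rightarrow\otimes\mat{I}\otimes\mat{T}^{[j+1,n]}_\leftarrow$ so that spectral-norm multiplicativity gives the stated coefficients, and invoke the product-state construction from Theorem~\ref{thm.worst_tn} for tightness. Your closing density-plus-continuity argument for the full-rank refinement goes slightly beyond what the paper writes for this corollary, but it matches the perturbed-product-state device the paper itself uses later in the proof of Corollary~\ref{cor.comp_mps_err}, so the argument is sound.
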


\begin{corollary} [All-site perturbation for canonical MPS] \label{cor.cmps_worst}
Let $\tn C = (G,\, \tsr{C}^{(1)}, \tsr{C}^{(2)}, \cdots, \tsr C^{(n)})$ be an MPS representing $\tsr C$ in canonical form with bond dimension bounded by $D$, centered at site $n$. 
Suppose $(\tsr{\delta}^{(i)})_{i = 1}^n$ is an $\epsilon$-perturbation to $\tsr C$.
Then up to an error of $\O(\epsilon^2)$,
\begin{equation} \label{eq.cmps_worst}
    \sE_r(\tn C, \tn{\delta}) 
    \leq 
    \epsilon\cdot \left( 1 + \sum_{j = 1}^{n-1}
    \frac{  \sqrt{D_j} \bnorm{2}{\mat C^{[j+1, n]}_\leftarrow}  }{  \norm{F}{\tsr C}  }
    \right)
    \leq
    \epsilon\cdot \left( 1 + (n-1)\sqrt{D}\right).
\end{equation}
Moreover, the bounds are tight, given that the bond dimension is $D$ across the MPS.
\end{corollary}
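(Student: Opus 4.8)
The plan is to start from the general all-site MPS bound already established in corollary~\ref{cor.mps_worst}, applied to the network $\tn C$, which gives
\[
  \sE_r(\tn C, \tn\delta) \leq \epsilon\cdot\sum_{j=1}^n \frac{\bnorm{2}{\mat{C}^{[1,j-1]}_\rightarrow}\cdot\norm{F}{\tsr{C}^{(j)}}\cdot\bnorm{2}{\mat{C}^{[j+1,n]}_\leftarrow}}{\norm{F}{\tsr C}} + \O(\epsilon^2),
\]
and then to collapse this sum using the structure forced by the canonical form. This separates the work cleanly: the summand for a generic MPS has three factors, and canonicalization centered at $n$ (definition~\ref{def.canon}) will trivialize two of them on the left. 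So the first step is simply to record the correct environment-matrix factorization $\mat{M}_{\tsr C^{(j)}} = \mat{C}^{[1,j-1]}_\rightarrow \otimes \mat{C}^{[j+1,n]}_\leftarrow$ (with the left or right factor replaced by identity at the endpoints) and substitute it in.

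Next I would exploit left-orthogonality. Since the center is at site $n$, each $\mat{C}^{(i)}_\rightarrow$ for $i<n$ has orthonormal columns, hence every cumulative matricization $\mat{C}^{[1,j-1]}_\rightarrow$ with $j\le n$ is an isometry and $\bnorm{2}{\mat{C}^{[1,j-1]}_\rightarrow}=1$. The same orthogonality gives $\norm{F}{\tsr C^{(j)}} = \norm{F}{\mat{C}^{(j)}_\rightarrow} = \sqrt{D_j}$ for $j<n$, because the Frobenius norm of a matrix with $D_j$ orthonormal columns is $\sqrt{D_j}$. For the center term $j=n$, the environment $\mat{M}_{\tsr C^{(n)}}$ is an isometry, so its $2$-norm is $1$, and since $\vec(\tsr C)=\mat{M}_{\tsr C^{(n)}}\vec(\tsr C^{(n)})$ with $\mat{M}_{\tsr C^{(n)}}$ isometric we get $\norm{F}{\tsr C^{(n)}}=\norm{F}{\tsr C}$; this term therefore contributes exactly $\epsilon$, the isolated ``$1$'' in the stated bound. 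Collecting these facts yields the first inequality. For the second inequality I would bound $\bnorm{2}{\mat{C}^{[j+1,n]}_\leftarrow}\le \bnorm{F}{\mat{C}^{[j+1,n]}_\leftarrow}$ and observe that, because $\mat{C}^{[1,j]}_\rightarrow$ is an isometry and isometries preserve Frobenius norm, $\norm{F}{\tsr C} = \bnorm{F}{\mat{C}^{[1,j]}_\rightarrow\, \mat{C}^{[j+1,n]\dagger}_\leftarrow} = \bnorm{F}{\mat{C}^{[j+1,n]}_\leftarrow}$, so each ratio is $\le 1$; together with $D_j\le D$ this gives $\epsilon(1+(n-1)\sqrt{D})$.

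The main obstacle is tightness, which I expect to require an explicit construction rather than a generic alignment argument. I would build a left-canonical MPS with bond dimension exactly $D$ on every bond that nonetheless represents a \emph{product state}, i.e.\ whose every left-going matricization $\mat{C}^{[j+1,n]}_\leftarrow$ is rank one; this is constructible by choosing the center as a rank-one tensor and then, proceeding right to left, picking each $\mat{C}^{(j)}_\rightarrow$ to be an isometry whose active column is a product vector aligned with the incoming rank-one direction and completing the remaining columns orthonormally. For such a network both inequalities above are saturated ($\bnorm{2}{\mat{C}^{[j+1,n]}_\leftarrow}=\norm{F}{\tsr C}$ and $D_j=D$), so it remains to exhibit a single $\epsilon$-perturbation saturating the triangle inequality hidden inside theorem~\ref{thm.worst_tn}. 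Here I would use the key spectral feature of this example: each $\mat{M}_{\tsr C^{(j)}}$ is the Kronecker product of an isometry (all singular values $1$) and a rank-one block (one nonzero singular value), so all its nonzero singular values are equal and its range equals its leading left-singular subspace, which contains $\vec(\tsr C)$. Choosing for each site the right-singular vector that $\mat{M}_{\tsr C^{(j)}}$ maps onto $\vec(\tsr C)$, scaled to norm $\epsilon\norm{F}{\tsr C^{(j)}}$, makes every contribution point along the common direction $\vec(\tsr C)$, so the contributions add constructively and the total error is exactly $\epsilon\,\norm{F}{\tsr C}\,(1+(n-1)\sqrt{D})$ to leading order. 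Verifying that this construction is genuinely left-canonical with full bond dimension $D$ at every cut, and that the chosen perturbation is admissible, is the delicate part of the argument.
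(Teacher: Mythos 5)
Your proposal is correct and follows essentially the same route as the paper: apply the general all-site MPS bound, use left-orthogonality to set $\bnorm{2}{\mat{C}^{[1,j-1]}_\rightarrow}=1$, $\norm{F}{\tsr C^{(j)}}=\sqrt{D_j}$, and $\norm{F}{\tsr C^{(n)}}=\norm{F}{\tsr C}=\bnorm{F}{\mat{C}^{[j+1,n]}_\leftarrow}$, and then prove tightness via a left-canonical MPS that is (nearly) a product state with all sitewise contributions aligned along $\vec(\tsr C)$ — your singular-vector description of the saturating perturbation is exactly the paper's explicit $\epsilon\sqrt{D}\,\vcr u_{i-1}\otimes\vcr v_i\otimes\vcr u_{i+1}^\dagger$ choice. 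The only cosmetic difference is that the paper adds a small full-rank term $\mat\Delta$ to the center node (to also cover the full-rank-matricization caveat stated at the start of Section 5) and tracks the resulting $\O(\epsilon^2)$ corrections, whereas you take the center exactly rank one, which suffices for the corollary as stated.
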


\begin{proof} 
    We use theorem \ref{thm.worst_tn}. 
    Apply \eqref{eq.upper_bnd} to $\tn{T}$, using that $E_{\tt{j}} = \mat{T}^{[1,j-1]} \otimes \mat{I}_{D_j} \otimes \mat{T}^{[j+1,n]}$ for $2 \leq j \leq n-1$, and $\mat{M}_{\tt{1}} = \mat I_{D_1} \otimes \mat{T}^{[2,n]}$, $\mat{M}_{\tt{n}} = \mat{T}^{[1,n-1]} \otimes \mat I_{D_n}$, where $D_j$ represents the physical dimension at node $j$, we get \eqref{eq.mps_worst}. 
    One can then deduce \eqref{eq.cmps_worst} from \eqref{eq.mps_worst} by inserting $\norm{2}{\mat C^{[1,j-1]}_\rightarrow} = 1$ and $\norm{F}{\tsr{C}^{(j)}} = \sqrt{D_j} \leq \sqrt{D}$ for all $j$. 
    The second inequality follows from the fact $\norm{F}{\tsr C} = \norm{F}{\tsr C^{[p,n]}} = \norm{F}{\tsr{C}^{(n)}}$ for all $1 \leq p \leq n$.
    
    Tightness of \eqref{eq.mps_worst} follows from theorem \ref{thm.worst_tn}. 
    However, a canonical node $\tsr{C}^{(j)}$ cannot be rank-deficient as a matrix (i,n the orientation of canonicalization), thus we need some more work for tightness of \eqref{eq.cmps_worst}.
    
    Provided that $D_j = D$ for all $j$, we aim to show the inequality between first and last term of \eqref{eq.cmps_worst} is tight. 
    Let $\tn C = (G,\tsr{C}^{(1)},\, \tsr{C}^{(2)},\, \cdots \,  \tsr{C}^{(n)})$ be a canonical form centered at the last node. 
    We construct an instance such that equality is achieved. 
    We will write $\mat{M}_j$ as the matricized tensor $\tsr{C}^{(j)}_\rightarrow$ for $2 \leq j \leq n-1$, with left bond and physical dimension being columns, and right bond being rows. 
    For $j = 1$, we set $\mat{M}_1 = \tsr{C}^{(1)}$.
    Assign $\mat{M}_n = \vcr{u}_n \vcr{v}_n^\dagger + \mat{\Delta}$, where $\vcr{u}_n,~\vcr{v}_n$ are some normalized vectors and $\mat{\Delta}$ is a full rank matrix with $\norm{F}{\mat{\Delta}} \ll 1$. 
    Thus $\mat{M}_n$ is close to rank 1.
    Further, for all $2 \leq k \leq n-1$, choose isometry $\mat{M}_k$ so that $\mat{M}_k \vcr{u}_{k+1} = \vcr{u}_{k} \otimes \vcr{v}_{k}$ for some unit vectors $\vcr{u}_k$ and $\vcr{v}_k$, whose dimensions match the left bond and physical dimension of $C_k$, respectively. 
    Denote $\vcr{v}_1 := \mat{M}_1 \vcr{u}_2$, then we have
    \begin{equation}
        \tsr{C} = \tsr{C}^{[1, n-1]} \cdot (\vcr{u}_n \otimes \vcr{v}_n + \mat{\Delta}) =\bigotimes_{j = 1}^{n} \vcr{v}_j + \tsr{C}^{[1, n-1]} \mat{\Delta}. 
    \end{equation}
    Now let $\tsr{\delta}^{(i)} = \epsilon \sqrt{D} \cdot \vcr{u}_{i-1} \otimes \vcr{v}_i \otimes \vcr{u}_{i+1}^\dagger$ for $2 \leq i \leq n-1$, and at the ends set $\tsr{\delta}^{(1)} = \epsilon \sqrt{D} \vcr{v}_1 \otimes \vcr{u}_2^\dagger$ and $\tsr{\delta}^{(n)} = \epsilon \tsr{C}^{(n)}$.
    Clearly, this perturbation satisfies the relative relations. 
    Let the perturbed MPS be $\hat{\tsr{C}}$, then since vectors $\vcr{u}_j \AND \vcr{v}_j$ are normalized, up to an error of $\O(\epsilon^2)$,
    \begin{align*}
           &\norm{F}{\tsr{C} - \hat{\tsr{C}}} = \norm{F}{\tsr{C}} \sE_r(\tn{C}, \delta)\\
        =\ & 
            \norm{F}{\sum_{j = 1}^n \tsr{C}^{[1,\, j - 1]}\tsr{\delta}^{(j)} \tsr{C}^{[j+1,\, n]}} \\
        =\ & 
            \Bnorm{F}{ 
            \left(1 + (n-1)\sqrt{D}\right)\epsilon \bigotimes_{j = 1}^n \vcr{v}_j + \sum_{j = 1}^{n-1} \tsr{C}^{[1, j-1]} \tsr{\delta}^{(j)} \tsr{C}^{[j+1, n-1]} \mat{\Delta}+ \epsilon \mat{\Delta}
            }\\
        \geq\ &
            \Bnorm{F}{
            \left(1 + (n-1)\sqrt{D}\right)\epsilon \bigotimes_{j = 1}^n \vcr{v}_j
            } 
            - \left( \epsilon + \sum_{j=1}^{n-1}\norm{F}{\tsr{\delta}^{(j)}} \right) \norm{F}{\mat{\Delta}}\\
        \geq\ & 
            \epsilon\left(1 + (n - 1) \sqrt{D}\right) - \O\left(n \sqrt{D} \epsilon\right) \norm{F}{\mat{\Delta}}. 
    \end{align*}
    
    Since $\norm{F}{\mat{\Delta}}$ can be made arbitrarily small, the second term on the right hand side can be controlled by $\O(\epsilon^2)$. 
    Notice that in this construction, $\norm{F}{\tsr{C}} = \norm{F}{\vcr{u}_n \otimes \vcr{v}_n + \mat{\Delta}} = 1 + \O(\epsilon^2)$ for sufficiently small $\mat{\Delta}$. Therefore, we have
    \[
        \sE_r(\tn{C}, \delta) = \epsilon \left(1 + (n-1) \sqrt{D} \right) + \O(\epsilon^2).
    \]
    Hence, the bound \eqref{eq.cmps_worst} is tight up to a second order error.
\end{proof}

\begin{remark} \label{rmk.justify_center}
It should be clearly seen from the proof that a similar result holds if the canonical center is not the last node. 
The assumption is made here so that the statement and expressions are cleaner. 
If the center node is not the last one, one needs to adjust the orientation of 2-norms in \eqref{eq.mps_worst} and \eqref{eq.cmps_worst}.
\end{remark}

In corollary \ref{cor.cmps_worst}, one could loosen the second term of \eqref{eq.cmps_worst} by using a global upper bound $D$ for bond dimensions to simplify the expression. 
For a sufficiently large number of sites, if we truncate tensors and control the bond dimension $D_j \leq D$, then most of the sites will have bond dimension $D$ and the behavior at two tails, where $D_j < D$, can be neglected. 
Thus in the following, we compare these two bounds only using the uniform upper bound $D$.

One interpretation of these bounds is that when on average 
$\frac{
\bnorm{2}{\mat{T}^{[1, j-1]}_\rightarrow} \cdot \bnorm{F}{\tt{j}} \cdot \bnorm{2}{\mat{T}^{[j+1, n]}_\leftarrow}
}{
\norm{F}{\tsr{T}}
} > \sqrt{D}$, then  converting to canonical form reduces the error in the worst-case. 
If we compare the error bounds for an arbitrary MPS, we have the following.

\begin{corollary}[Comparison of MPS error bounds] \label{cor.comp_mps_err}
    Let $\tn{T} = \sT(\tt{1}, ~ \tt{2}, ~ \cdots ~ , \tt{n})$ be an arbitrary MPS. Let a canonical form of $\tn{T}$ be $\tn{C} = (G,\tsr{C}^{(1)},\, \tsr{C}^{(2)}, \cdots, \tsr{C}^{(n)})$. 
    Let $\hat{\tn{T}} \AND \hat{\tn{C}}$ be the relatively perturbed MPS of $\tn{T}$ and $\tn{C}$ by $\epsilon$-perturbations $(\alpha_i)_{i = 1}^n \AND (\beta_i)_{i = 1}^n$. 
    Suppose bond dimensions are uniformly bounded by $D$. 
    Then up to an error of order $\O(\epsilon^2)$,
    \begin{equation} \label{eq.comp_mps_err}
        \sup_\beta \sE_r(\tn{C}, \beta) \leq \frac{1 + (n - 1) \sqrt{D} }{ n } \sup_\alpha \sE_r(\tn{T}, \alpha).
    \end{equation}
    Moreover, provided that the bond dimension is $D$ across the MPS, the inequality is sharp, in the sense that the factor expressed in $n \AND D$ cannot be improved.
\end{corollary}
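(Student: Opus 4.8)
The plan is to deduce \eqref{eq.comp_mps_err} by combining the two tight bounds already established: the exact general-MPS worst-case error of Corollary~\ref{cor.mps_worst} and the tight canonical bound $\epsilon(1+(n-1)\sqrt{D})$ of Corollary~\ref{cor.cmps_worst}. The canonical side immediately gives $\sup_\beta \sE_r(\tn{C},\beta) \le \epsilon(1+(n-1)\sqrt{D})+\O(\epsilon^2)$, so the whole statement reduces to a \emph{universal lower bound} on the general-form worst-case error, namely $\sup_\alpha \sE_r(\tn{T},\alpha) \ge n\epsilon + \O(\epsilon^2)$ for every representation $\tn{T}$ of the tensor.

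First I would prove this lower bound. By Corollary~\ref{cor.mps_worst} we have $\sup_\alpha \sE_r(\tn{T},\alpha) = \epsilon\sum_{j=1}^n \frac{\|\mat{T}^{[1,j-1]}_\rightarrow\|_2\,\|\tt{j}\|_F\,\|\mat{T}^{[j+1,n]}_\leftarrow\|_2}{\|\tsr{T}\|_F}+\O(\epsilon^2)$, so it suffices to show each summand is at least $1$. The environment matrix of node $\tt{j}$ factors as $\mat{M}_{\tt{j}}=\mat{T}^{[1,j-1]}_\rightarrow\otimes\mat{I}\otimes\mat{T}^{[j+1,n]}_\leftarrow$, so the $2$-norm of this Kronecker product satisfies $\|\mat{M}_{\tt{j}}\|_2=\|\mat{T}^{[1,j-1]}_\rightarrow\|_2\,\|\mat{T}^{[j+1,n]}_\leftarrow\|_2$. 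Using the matrix--vector representation \eqref{eq.matvec_repr}, $\|\tsr{T}\|_F=\|\mat{M}_{\tt{j}}\vec(\tt{j})\|_2\le\|\mat{M}_{\tt{j}}\|_2\|\tt{j}\|_F$, which is exactly the assertion that the $j$th summand is $\ge 1$. Summing over $j$ gives $\sup_\alpha\sE_r(\tn{T},\alpha)\ge n\epsilon$, and combining with the canonical bound yields the factor $\frac{1+(n-1)\sqrt{D}}{n}$ and hence \eqref{eq.comp_mps_err}.

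For sharpness I would reuse the near-product-state construction from the proof of Corollary~\ref{cor.cmps_worst}: it produces a bond-dimension-$D$ canonical MPS $\tn{C}$ whose tensor is $\tsr{C}=\bigotimes_{j=1}^n\vcr{v}_j+\tsr{C}^{[1,n-1]}\mat{\Delta}$ with $\|\mat{\Delta}\|_F=\eta\ll 1$, and for which the canonical worst-case error equals $\epsilon(1+(n-1)\sqrt{D})+\O(\epsilon^2)$, saturating the numerator. For the denominator I would pair it with a \emph{product-like gauge} $\tn{T}$ of the \emph{same} tensor, in which each matricization $\mat{T}^{[1,j-1]}_\rightarrow$ and $\mat{T}^{[j+1,n]}_\leftarrow$ is close to rank one and $\vec(\tt{j})$ nearly aligns with the top right singular vector of $\mat{M}_{\tt{j}}$; then every summand in the general-form bound tends to $1$, so $\sup_\alpha\sE_r(\tn{T},\alpha)\to n\epsilon$. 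Since $\tn{C}$ is a canonical form of the same tensor as $\tn{T}$, this pair is an admissible instance of the corollary, and the ratio of the two worst-case errors approaches $\frac{1+(n-1)\sqrt{D}}{n}$.

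The main obstacle is the \emph{simultaneous} saturation. Equality in the lower bound forces every $\vec(\tt{j})$ to be a top singular vector of $\mat{M}_{\tt{j}}$; this holds for a genuine product state, but a genuine product state has Schmidt rank $1$ and so cannot carry bond dimension $D$. The term $\mat{\Delta}$ is precisely what lifts the bond dimension to $D$, yet it simultaneously spoils the alignment, forcing the general-form summands above $1$ by $\O(\eta)$. I would therefore argue via the limit $\eta\to 0^+$: for each fixed $\eta>0$ the example has genuine bond dimension $D$, while as $\eta\to 0^+$ the ratio converges to $\frac{1+(n-1)\sqrt{D}}{n}$, showing the constant is optimal and cannot be replaced by anything smaller, even if it is not attained. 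Some care is needed to check that the absorbed $\O(\epsilon^2)$ and $\O(\eta)$ corrections are controlled uniformly so that the limit is legitimate.
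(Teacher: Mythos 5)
Your overall architecture is the right one and matches the paper's: reduce the inequality to the universal lower bound $\sup_\alpha \sE_r(\tn{T},\alpha) \geq n\epsilon + \O(\epsilon^2)$, combine with the canonical upper bound $\epsilon(1+(n-1)\sqrt{D})$, and prove sharpness via a near-product-state instance with a vanishing full-rank correction. Your sharpness discussion is essentially the paper's construction, and your explicit attention to the tension between genuine bond dimension $D$ and the product-state limit (arguing via $\eta\to 0^+$) is a point the paper glosses over; that part is fine.

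The gap is in your lower bound. You write ``By Corollary~\ref{cor.mps_worst} we have $\sup_\alpha \sE_r(\tn{T},\alpha) = \epsilon\sum_{j}\norm{2}{\mat{T}^{[1,j-1]}_\rightarrow}\norm{F}{\tt{j}}\norm{2}{\mat{T}^{[j+1,n]}_\leftarrow}/\norm{F}{\tsr{T}}+\O(\epsilon^2)$'' and then show each summand is at least $1$. But that expression is only an \emph{upper} bound on the supremum: by Theorem~\ref{thm.worst_tn}, the triangle-inequality step $\norm{2}{\sum_i \mat{M}_{\tt{i}}\Ba_i}\le\sum_i\norm{2}{\mat{M}_{\tt{i}}}\norm{2}{\Ba_i}$ is saturated only when the environments $\mat{M}_{\tt{1}},\ldots,\mat{M}_{\tt{n}}$ share a common top left singular vector (e.g.\ scalar output or product states), and the paper explicitly notes in Section~\ref{s.num} that the bounds of Corollaries~\ref{cor.mps_worst} and~\ref{cor.cmps_worst} ``may not be uniformly tight for general MPS.'' Showing that an upper bound exceeds $n\epsilon$ says nothing about the supremum itself, so your chain does not establish $\sup_\alpha\sE_r(\tn{T},\alpha)\ge n\epsilon$ for an arbitrary gauge $\tn{T}$ — which is exactly the quantifier the corollary needs, since the denominator must be controlled for \emph{every} representation. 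The fix is immediate and is what the paper does: take the admissible perturbation $\alpha_i=\epsilon\,\tt{i}$; by multilinearity (Proposition~\ref{prop.lin}) the perturbed network contracts to $(1+\epsilon)^n\tsr{T}$, so $\sE_r(\tn{T},\alpha)=(1+\epsilon)^n-1=n\epsilon+\O(\epsilon^2)$, giving the lower bound directly without invoking tightness of any worst-case formula. Your observation that each summand is at least $1$ (via $\norm{F}{\tsr{T}}=\norm{2}{\mat{M}_{\tt{j}}\vec(\tt{j})}\le\norm{2}{\mat{M}_{\tt{j}}}\norm{F}{\tt{j}}$) is correct and a nice consistency check, but it cannot replace this step.
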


\begin{proof}
The inequality is not hard to establish,
since $\sup_\alpha \sE_r(\tn{T}, \alpha) \geq n\epsilon$ by choosing $\alpha_i = \epsilon \tt{i}$ for all $i \leq n$. 
To prove the tightness, we already know that the bounds \eqref{eq.mps_worst} and \eqref{eq.cmps_worst} are achieved when $\tn{T}$ and $\tn{C}$ are close to a product state network.
Construct $\tn{T} = \sT(\tt{1},\, \tt{2},\, \ldots \, , \tt{n})$ with $\tt k = \tsr{\overline{T}}^{(k)} + \mat{\Delta}^{(k)}$, where $\sT(\tsr{\overline{T}}^{(1)},~\tsr{\overline{T}}^{(2)},~\ldots,~\tsr{\overline{T}}^{(n)})$ is a product state network and $\mat{\Delta}^{(k)}$ are tensors with full rank matricization and small Frobenius norm. 
Thus $\tn{T}$ is nearly a tensor network product state, while $\tn{C}$ is close to a rank-1 at the center site.
The error is maximized in this case for the canonical form, and we have $\sup_\alpha \sE(\tn T, \alpha) = n\epsilon$ and $\sup_\beta \sE(\tn{C}, \beta) = \left( 1 + (n-1) \sqrt{D} \right) \epsilon$, up to an error of $\O(\epsilon^2)$. Thus \eqref{eq.comp_mps_err} is achieved.
\end{proof}

It is conjectured that a canonical MPS should have a better conditioning compared to general ones. 
However, corollary \ref{cor.comp_mps_err} is not as promising as expected, with a factor $\sqrt{D}$ in the numerator. 
Nevertheless, one can still consider canonical form provides an improvement in conditioning and error bound, in view that we essentially cannot bound in the other direction, i.e. given the canonical form $\tn{C}$, $\sup_\alpha \sE(\tn{T}, \alpha)$ is unbounded over all possible choices of $\tn{T}$.

\subsection{Stability and error analysis of PEPS} \label{ss.peps} 
Now we turn to applications to PEPS, and we will focus on stability of columnwise canonical PEPS, proposed in \cite{haghshenas2019conversion}. 
Despite the possibility to analyze sitewise perturbations, we will consider sitewise perturbation in a prescribed "center" column, and only columnwise perturbation in other columns. 
There are two main reasons for this. First, this is a more realistic model in practice. 
If the bound dimension is controlled (i.e. some truncation is made when representing a state by a PEPS) in algorithms like DMRG, then it is in general impossible to move from column to column exactly. 
One has to truncate the entire column to keep the bond dimension low, which leads to a dominating columnwise relative error. 
Second, despite that we can bound the entire error caused by sitewise perturbation, the worst-case bound is too large to be practically useful (as we will see, for a PEPS of size $m\times n$ and bond dimension $D$, even with columnwise perturbations, the error bound will be of order $nD^{m/2}$).

It is not hard to generalize the result for MPS to columnwise perturbation in PEPS. 
Recall that, as mentioned in introduction, we assume the PEPS is canonicalized towards the node at the lower right corner. 
We consider the model in which columnwise perturbation $(\mat{\Delta}^{(i)})_{i = 1}^{n-1}$ is introduced to a $m \times n$ PEPS $\tn{T}$, and sitewise perturbations $(\tsr{\delta}^{(i)})_{i = 1}^m$ are introduced to the center column $\tn{T}^{(\cdot, n)}$. 
The perturbed PEPS $\hat{\tn{T}}$ is given by
\begin{equation}
    \begin{cases}
    \hat{\tsr{T}}^{(\cdot, i)} = \tsr{T}^{(\cdot, i)} + \mat{\Delta}^{(i)} & \text{ if } 1 \leq i \leq n - 1, \\
    
    \hat{\tsr{T}}^{(j,n)} = \tsr{T}^{(j,n)} + \tsr{\delta}^{(j)} & \text{ for } 1\leq j \leq m.
    \end{cases}
\end{equation}
We will say $\hat{\tn{T}}$ is the perturbed PEPS of $\tn{T}$ by $(\Delta, \delta)$, and we denote the absolute error as
\begin{equation}
    \sE_a(\tn{T}, \Delta, \delta) :
    =  \norm{F}{\hat{\tsr{T}} - \tsr{T}},
\end{equation}
and the relative one as
\begin{equation}
    \sE_r(\tn{T}, \Delta, \delta) :
    = \frac{ \norm{F}{\hat{\tsr{T}} - \tsr{T}} }{ \norm{F}{\tsr{T}} }.
\end{equation}
We generalize the definition of $\epsilon$-perturbation. We say $(\Delta,\delta)$ is an $(\epsilon_1, \epsilon_2)$-perturbation to PEPS $\tn{T}$ if 
$\norm{F}{\mat{\Delta}^{(i)}} \leq \epsilon_1 \norm{F}{\tsr{T}^{(\cdot, i)}}$, and $\norm{F}{\tsr{\delta}^{(j)}} \leq \epsilon_2 \norm{F}{\tsr T^{(j,n)}} \FORAL i \leq n-1,~j\leq m$.

As in the previous part, we state the worst-case errors for general and canonical PEPS first, prove them together, and then compare them. 

\begin{corollary}[Worst-case error for general PEPS] \label{cor.peps_worst}
Let $\tn{T}$ be an arbitrary $m\times n$ PEPS. Suppose $(\Delta, \delta)$ is an $(\epsilon_1,\epsilon_2)$-perturbation to $\tn{T}$.
Then up to an error of $\O(\epsilon_1^2 + \epsilon_2^2)$,
\begin{equation} \label{eq.peps_worst}
    \begin{aligned} 
            \sE_r(\tn{T}, \Delta, \delta) \leq \ 
            & \epsilon_1 \cdot \sum_{i = 1}^{n-1} 
            \frac{ 
            \bnorm{2}{\mat{T}^{(\cdot, [1,i-1])}_\rightarrow} \, \norm{F}{\tsr{T}^{(\cdot, i)}} \, \bnorm{2}{\mat{T}^{(\cdot, [i+1,n])}_\leftarrow}
            }{
            \norm{F}{\tsr{T}}
            } \\ 
            &+ \sE_r(\tn{T}^{(\cdot, n)}, \delta) \cdot 
            \frac{
            \bnorm{2}{\mat{T}^{(\cdot,[1,n-1])}_\rightarrow} \norm{F}{\tsr{T}^{(\cdot, n)}}
            }
            {
            \norm{F}{\tsr{T}}
            },
    \end{aligned}
\end{equation}
where upper bound of $\sE_r(\tn{T}^{(\cdot, n)}, \delta)$ 
with $\epsilon = \epsilon_2$. 
Moreover, the bound is tight replacing $\sE_r(\tn{T}^{(\cdot, n)}, \delta)$ with the right hand side of \eqref{eq.mps_worst}.
\end{corollary}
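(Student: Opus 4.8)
The plan is to treat the PEPS as a coarse MPS whose nodes are its columns, so that the columnwise perturbations reduce to a single-site MPS analysis, and to handle the center column by a nested application of the MPS perturbation theory, followed by a product-state construction for tightness. First I would invoke multilinearity (Proposition \ref{prop.lin}) to linearize the error. Because every cross-term between two distinct perturbations is quadratic and hence absorbed into the remainder, to leading order
\[
\hat{\tsr T} - \tsr T = \sum_{i=1}^{n-1} \tsr{T}^{(\cdot, [1,i-1])}\, \mat{\Delta}^{(i)}\, \tsr{T}^{(\cdot, [i+1,n])} + \tsr{T}^{(\cdot, [1,n-1])}\bigl(\hat{\tsr T}^{(\cdot, n)} - \tsr T^{(\cdot,n)}\bigr) + \O(\epsilon_1^2 + \epsilon_2^2).
\]

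Next, for each $i \leq n-1$ I would regard column $i$ as a single node of the coarse MPS; its environment matrix is $\mat{T}^{(\cdot, [1,i-1])}_\rightarrow \otimes \mat{T}^{(\cdot, [i+1,n])}_\leftarrow$, exactly as in the single-site MPS environment computation. Applying Corollary \ref{cor.one_site} to this node bounds the $i$th summand in Frobenius norm by $\norm{F}{\mat{\Delta}^{(i)}}\,\bnorm{2}{\mat{T}^{(\cdot,[1,i-1])}_\rightarrow}\,\bnorm{2}{\mat{T}^{(\cdot,[i+1,n])}_\leftarrow}$, which is at most $\epsilon_1 \norm{F}{\tsr{T}^{(\cdot,i)}}\,\bnorm{2}{\mat{T}^{(\cdot,[1,i-1])}_\rightarrow}\,\bnorm{2}{\mat{T}^{(\cdot,[i+1,n])}_\leftarrow}$.

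For the center column I would use a two-level argument. The column network $\tn{T}^{(\cdot,n)}$ is itself an MPS of $m$ sites subjected to the all-site perturbation $\delta$, so its internal relative error is by definition $\sE_r(\tn{T}^{(\cdot,n)},\delta)$, which Corollary \ref{cor.mps_worst} bounds (with $\epsilon = \epsilon_2$) by the right-hand side of \eqref{eq.mps_worst}. This column-level deviation then propagates to the full state through the left environment $\mat{T}^{(\cdot,[1,n-1])}_\rightarrow$, contributing at most $\sE_r(\tn{T}^{(\cdot,n)},\delta)\,\norm{F}{\tsr{T}^{(\cdot,n)}}\,\bnorm{2}{\mat{T}^{(\cdot,[1,n-1])}_\rightarrow}$ to $\norm{F}{\hat{\tsr T}-\tsr T}$. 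Collecting the $n-1$ columnwise terms and the center term by the triangle inequality and dividing by $\norm{F}{\tsr T}$ yields \eqref{eq.peps_worst}. For tightness I would mirror the MPS construction, taking a PEPS that is nearly a product state across its columns and whose center column is itself nearly a product state, chosen so that the largest left singular vectors of every environment matrix align; this simultaneously saturates each triangle inequality, and the saturation of the center term is precisely the tightness of \eqref{eq.mps_worst}, which is why $\sE_r(\tn{T}^{(\cdot,n)},\delta)$ is replaced by its tight bound in the tightness claim.

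The main obstacle I expect is the nested treatment of the center column: one must verify that the internal sitewise error of column $n$ behaves as a single coherent perturbation of the surrounding PEPS, and that composing the column-level MPS bound with the PEPS-level environment neither introduces lower-order errors nor loses tightness. This requires careful bookkeeping of which interaction terms are genuinely $\O(\epsilon_1^2 + \epsilon_2^2)$, together with confirming that the singular-vector alignment in the tightness construction can be achieved simultaneously at both the columnwise and the sitewise level.
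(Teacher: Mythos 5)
Your proposal is correct and follows essentially the same route as the paper: coarse-grain the PEPS into an MPS of columns, apply the all-site MPS perturbation bound to the non-center columns (your explicit linearization plus per-column environment bound is exactly how Corollary \ref{cor.mps_worst} is proved), handle the center column by a nested application of the MPS bound propagated through its left environment, and establish tightness by checking that the near-product-state constructions at the column level and within the center column are compatible. The paper's own proof is only a sketch making these same points, so no further comparison is needed.
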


\begin{corollary}[Worst-case error for canonical PEPS] \label{cor.cpeps_worst}
Let $\tn C = (\tsr C^{(i,j)})$ be a canonical $m\times n$ PEPS centered at the lower right corner. Suppose $(\Delta, \tn{\delta})$ is an $(\epsilon_1,\epsilon_2)$-perturbation to $\tn C$. 
Let $D_{ij}$ be the physical dimension at node $(i,~j)$, bounded by $D$. 
Then up to an error of $\O(\epsilon_1^2 + \epsilon_2^2)$,
\begin{equation} \label{eq.cpeps_worst}
    \begin{aligned}
        \sE_r(\tn C, \Delta, \tn{\delta}) \leq \ & \epsilon_1 \cdot \sum_{j = 1}^{n-1} \frac{ \left(\prod_{i = 1}^m D_{ij}^{1/2} \right)
        \norm{2}{\mat{C}^{(\cdot, [j+1,n])}_\leftarrow}
        }{
        \norm{F}{\tsr C}
        } + \sE_r(\tn C^{n,\cdot}, \delta) 
        \\
        \leq \ & 
        \epsilon_1(1 + (n-1)D^{m/2}) + \sE_r(\tn C^{(n,\cdot)}, \delta),
    \end{aligned}
\end{equation}
where $\sE_r(\tn C^{(n,\cdot)}, \delta)$ is bounded by \eqref{eq.cmps_worst} with $\epsilon = \epsilon_2$.
Moreover, provided that $D_{ij} = D$ for all $i,~j$, the bounds are tight replacing $\sE_r(\tn C^{(n,\cdot)}, \delta)$ by right hand side of \eqref{eq.cmps_worst}.
\end{corollary}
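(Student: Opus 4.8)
The plan is to prove Corollary \ref{cor.cpeps_worst} together with Corollary \ref{cor.peps_worst}, exploiting the fact that a columnwise canonical PEPS is, at the level of whole columns, an MPS whose sites are the super-nodes $\tsr{C}^{(\cdot, 1)},~\ldots,~\tsr{C}^{(\cdot, n)}$. First I would establish the general PEPS bound \eqref{eq.peps_worst}. Viewing each column as one site of a length-$n$ MPS, the environment matrix of column $j$ is $\mat{T}^{(\cdot,[1,j-1])}_\rightarrow \otimes \mat{T}^{(\cdot,[j+1,n])}_\leftarrow$, exactly as in the single-column case. Since $\sT$ is multilinear (Proposition \ref{prop.lin}), the leading-order error splits, by the same triangle-inequality and operator-norm argument as in Theorem \ref{thm.worst_tn}, into a sum of contributions, one per column. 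For columns $1$ through $n-1$ the perturbation is the columnwise $\mat{\Delta}^{(i)}$ with $\norm{F}{\mat{\Delta}^{(i)}} \le \epsilon_1 \norm{F}{\tsr{T}^{(\cdot,i)}}$, which yields the first sum; for the center column $n$ the perturbation is the sitewise one, whose internal amplification is $\sE_r(\tn{T}^{(\cdot,n)}, \delta)$ from Corollary \ref{cor.mps_worst}, and which is then propagated through the environment of column $n$, giving the second term. Tightness of \eqref{eq.peps_worst} follows by combining the columnwise tightness of Theorem \ref{thm.worst_tn} with the sitewise tightness inside the center column.

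Next I would specialize to the canonical PEPS to obtain \eqref{eq.cpeps_worst}. Because $\tn{C}$ is canonicalized towards the lower-right corner, the left column blocks are isometric, so $\bnorm{2}{\mat{C}^{(\cdot,[1,i-1])}_\rightarrow} = 1$, and each isometric column has Frobenius norm $\norm{F}{\tsr{C}^{(\cdot,j)}} = \prod_{i=1}^m D_{ij}^{1/2}$, the columnwise analog of the identity $\norm{F}{\tsr{C}^{(j)}} = \sqrt{D_j}$ used in Corollary \ref{cor.cmps_worst}. Substituting these into \eqref{eq.peps_worst} gives the first line of \eqref{eq.cpeps_worst}. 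For the second line I would use that contraction against an isometric left block preserves Frobenius norm, so $\norm{F}{\tsr{C}^{(\cdot,n)}} = \norm{F}{\tsr{C}}$ and, for each $j$, $\bnorm{2}{\mat{C}^{(\cdot,[j+1,n])}_\leftarrow} \le \norm{F}{\tsr{C}^{(\cdot,[j+1,n])}} = \norm{F}{\tsr{C}}$; hence each of the $n-1$ columnwise terms is bounded by $\prod_i D_{ij}^{1/2} \le D^{m/2}$, while the center-column prefactor collapses so that its contribution is exactly $\sE_r(\tn{C}^{(\cdot,n)}, \delta)$, which \eqref{eq.cmps_worst} bounds with $\epsilon = \epsilon_2$.

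Finally, for tightness I would lift the extremal construction from the proof of Corollary \ref{cor.cmps_worst} to columns: take $\tn{C}$ to be a small full-rank perturbation of a columnwise product state, so that each off-center column is an isometry sending the dominant singular direction of its right neighbor to a product of unit vectors while the contracted last column is nearly rank one, and choose each $\mat{\Delta}^{(j)}$ aligned with these unit directions. This makes every triangle and operator-norm inequality above an equality up to $\O(\epsilon_1^2 + \epsilon_2^2)$, and choosing $\delta$ inside the center column according to the MPS tightness construction simultaneously saturates $\sE_r(\tn{C}^{(\cdot,n)}, \delta)$. I expect the main obstacle to be exactly this tightness step: unlike the MPS case, one must align the columnwise perturbations and the internal sitewise perturbation of the center column so that both the outer columnwise bound and the inner MPS bound are saturated at once, while keeping the full-rank corrections small enough that all cross terms remain of order $\O(\epsilon_1^2 + \epsilon_2^2)$.
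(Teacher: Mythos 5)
Your proposal follows essentially the same route as the paper: treat the columns as super-nodes of a length-$n$ MPS, apply Corollaries \ref{cor.mps_worst} and \ref{cor.cmps_worst} at the column level, bound the center column's contribution by a second application of the MPS result, and obtain tightness by checking that the product-state extremal constructions for the outer (columnwise) and inner (center-column) bounds are compatible. Your version actually spells out the norm substitutions (isometric left blocks, $\norm{F}{\tsr{C}^{(\cdot,j)}} = \prod_i D_{ij}^{1/2}$, Frobenius-norm preservation) that the paper's sketch leaves implicit, and correctly identifies the compatibility of the two extremal constructions as the only delicate point.
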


\begin{proof}
The proofs are applications of corollary \ref{cor.mps_worst} and \ref{cor.cmps_worst}, so we only present a sketch. 
For general PEPS, we first apply corollary \ref{cor.mps_worst} to the MPS formed by contracting columns of PEPS into tensors, i.e. the MPS $(G, \tsr{B}^{(1)}, ~ \tsr{B}^{(2)}, ~ \cdots, ~ \tsr{B}^{(n)})$, where $\tsr{B}^{(j)} = \sT(\tsr T^{(1,j)},~\tsr T^{(2,j)},~\cdots,~\tsr T^{(m,j)})$, and we bound the error in the last column $\tsr{B}^{(n)} = \sT(\tsr T^{(1,n)}, ~ \tsr T^{(2,n)}, ~ \cdots, ~ \tsr T^{(m,n)})$ by applying corollary \ref{cor.mps_worst} again. 
For tightness, we only need to prove that the equality-achieving cases for $\tsr{B}^{(n)} = \tsr{T}^{(\cdot,n)}$ as an MPS and $\B = \tn{T}$ are compatible. 
Indeed, to achieve the supremum of $\sE_r(\tn{T}^{(\cdot, n)}, \delta)$, we need $\tn{T}^{(\cdot, n)}$ to be close to a product state, which is compatible to another equality-achieving requirement in \eqref{eq.peps_worst} that $\tn{T}$ is close to a product state. 
The same applies to corollary \ref{cor.cpeps_worst}, with the aid of proof of corollary \ref{cor.cmps_worst}. We omit the details. 
\end{proof}

\begin{remark}
As mentioned in remark \ref{rmk.justify_center}, it is not hard to see from the proof that bound \eqref{eq.cmps_worst} holds when canonical center is not at the lower-right corner (the second inequality always holds. The first inequality can be adjusted when centered to other locations). 
\end{remark}

As in the previous MPS section, we compare the bounds we have. The result is similar to what we have for MPS.

\begin{corollary} [Comparison of PEPS error bounds] \label{cor.comp_peps_err}
    Let $\tn{T}$ be an arbitrary PEPS of size $m \times n$ and $\tn C$ be a canonical form of $\tn{T}$. Suppose $(\mat{\Delta}^{(i)},~\tsr{\delta}^{(j)})$ and $(\Xi_i,~\xi_j)$ are $(\epsilon_1,\epsilon_2)$-perturbations to $\tn{T}$ and $\C$ respectively.
    Then up to an error of order $\O(\epsilon_1^2 + \epsilon_2^2)$,
    \begin{equation} \label{comp_pepsf}
        \sup_{\Xi,~\xi} \sE_r(\C, \Xi, \xi) 
        \leq 
        \frac{
            \epsilon_1 \big(1 + (n - 1) D^{\frac{m}{2}} \big) + \epsilon_2 \big(1 + (m - 1) \sqrt{D}\big)
        }{ 
            \epsilon_1 (n - 1) + \epsilon_2 m 
        } 
        \sup_{\Delta, ~\delta} \sE_r(\tn{T}, \Delta, \delta).
    \end{equation}
    Moreover, the inequality is sharp, in the sense that the factor expressed in $n,~m, \AND D$ cannot be improved.
\end{corollary}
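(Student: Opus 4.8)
The plan is to mirror the proof of Corollary~\ref{cor.comp_mps_err} for MPS, exploiting that the columnwise structure lets us treat the PEPS both as an MPS of column-supersites and, for the center column, as an ordinary length-$m$ MPS. The inequality itself will be a one-line division once I have a matching upper bound on the canonical side and lower bound on the general side. The upper bound is exactly Corollary~\ref{cor.cpeps_worst}: $\sup_{\Xi, \xi}\sE_r(\C, \Xi, \xi) \leq \epsilon_1(1 + (n-1)D^{m/2}) + \epsilon_2(1 + (m-1)\sqrt{D}) + \O(\epsilon_1^2 + \epsilon_2^2)$, where the center-column contribution comes from applying the canonical-MPS bound (Corollary~\ref{cor.cmps_worst}) to the $m$-site center column. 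For the general-PEPS denominator I would produce the lower bound by choosing the aligned perturbation $\mat{\Delta}^{(i)} = \epsilon_1 \tsr{T}^{(\cdot, i)}$ for $1 \leq i \leq n-1$ and $\tsr{\delta}^{(j)} = \epsilon_2 \tsr{T}^{(j,n)}$ for $1 \leq j \leq m$. By multilinearity (Proposition~\ref{prop.lin}) this scales each of the $n-1$ non-center columns by $(1+\epsilon_1)$ and each center-column site by $(1+\epsilon_2)$, so $\hat{\tsr{T}} = (1+\epsilon_1)^{n-1}(1+\epsilon_2)^m \tsr{T}$ and hence $\sE_r(\tn{T}, \Delta, \delta) = (n-1)\epsilon_1 + m\epsilon_2 + \O(\epsilon_1^2 + \epsilon_2^2)$. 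Dividing the canonical upper bound by this saturated denominator yields the claimed factor.

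For sharpness I would construct a single PEPS $\tn{T}$ on which both bounds are attained at once, following the MPS recipe: take $\tn{T}$ to be a small full-rank perturbation of a product state network, so that its columnwise canonical form $\C$ is nearly rank-$1$ across every column cut and nearly rank-$1$ at the center of its center column. For such $\tn{T}$, the aligned perturbation above saturates the lower bound $(n-1)\epsilon_1 + m\epsilon_2$, while the tightness argument already carried out in Corollary~\ref{cor.cpeps_worst}---which reduces to the tightness of Corollary~\ref{cor.cmps_worst} applied once across the columns and once within the center column---saturates the numerator $\epsilon_1(1 + (n-1)D^{m/2}) + \epsilon_2(1 + (m-1)\sqrt{D})$. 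The compatibility of the two equality-achieving configurations is precisely the one verified in the proof of Corollary~\ref{cor.cpeps_worst}: the single requirement that ``$\tn{T}$ is close to a product state'' feeds both the column-level and the center-column-level tightness.

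The main obstacle is the sharpness claim, specifically verifying that the two independent tightness constructions---one forcing the column cuts of the canonical form to contribute the $D^{m/2}$ factors, and one forcing the center column to contribute the $\sqrt{D}$ factor---can coexist on the same PEPS without interference. This is exactly where the product-state construction is essential: it simultaneously makes all the relevant environment matricizations isometric with the correct gap between Frobenius and operator norms, so that both chains of inequalities in Corollary~\ref{cor.cpeps_worst} collapse to equalities while the aligned perturbation simultaneously realizes the general-PEPS lower bound. Once that construction is invoked, the inequality is immediate and the $\O(\epsilon_1^2 + \epsilon_2^2)$ remainders are controlled exactly as in the earlier corollaries.
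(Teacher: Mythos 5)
Your proposal is correct and follows essentially the same route as the paper: lower-bound $\sup_{\Delta,\delta}\sE_r(\tn{T},\Delta,\delta)$ by the aligned scaling perturbation $\mat{\Delta}^{(i)}=\epsilon_1\tsr{T}^{(\cdot,i)}$, $\tsr{\delta}^{(j)}=\epsilon_2\tsr{T}^{(j,n)}$, upper-bound the canonical side by Corollary~\ref{cor.cpeps_worst}, and establish sharpness on a PEPS that is a vanishing full-rank perturbation of a product state network, for which both the columnwise and center-column tightness constructions coexist. The only cosmetic difference is that you make the multilinearity computation $\hat{\tsr{T}}=(1+\epsilon_1)^{n-1}(1+\epsilon_2)^m\tsr{T}$ explicit where the paper merely asserts the resulting lower bound.
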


\begin{proof}
We follow the proof of corollary \ref{cor.comp_mps_err}. The inequality holds since $\sup_{(\Delta,\delta)} \sE_r(\tn{T}, \Delta, \delta) \geq \epsilon_1 (n - 1) + \epsilon_2 m$ by choosing
$\mat{\Delta}^{(j)} = \epsilon_1 \tsr{T}^{(\cdot,j)}$ and $\tsr{\delta}^{(i)} = \epsilon_2 \tsr{T}^{(i,n)}$ for all $i \leq m,~j \leq n-1$. 
To achieve equality with $\tn{T}$ whose non-center columns and nodes in center columns have full rank matricization, we consider perturbed product state PEPS, and send perturbation to 0. 
Specifically, write $\tsr{T} = \sT(\tsr{B}^{(1)},\, \tsr{B}^{(2)},\, \cdots \, , \tsr{B}^{(n)})$, where $\tsr{B}^{(j)} = \sT(\tsr T^{(1,j)},~\tsr{T}^{(2,j)},~\cdots,~\tsr{T}^{(m,j)})$. 
Now choose $\tsr{B}^{(j)} = \tsr{\overline{B}}^{(j)} + \tsr{A}^{(j)}$, where $\sT(\tsr{\overline{B}}^{(1)},~\tsr{\overline{B}}^{(2)},~\ldots,~\tsr{\overline{B}}^{(n)})$ is a product state network, 
and $\tsr{A}^{(j)}$ has full rank matricization, as in the proof of corollary \ref{cor.comp_mps_err} for $1 \leq j \leq n-1$. 
We can do the same trick regarding $\tsr{B}^{(n)}$ as an MPS.
Choose $\tsr{T}^{(i,n)} = \overline{\tsr{T}}^{(i,n)} + \tsr{A}^{(i,n)}$, where $\overline{\tsr{T}}^{(i,n)}$ form a product state $\tn{\overline{B}}$ and $\tsr{A}^{(i,n)}$ has full rank matricization.
As the magnitude of noise $\tsr{A}^{(j)} \AND \tsr{A}^{(i,n)}$ goes to zero, $\tn{T}$ and hence $\tn C$ are arbitrarily close to a product state. 
Thus equalities in \eqref{eq.peps_worst} and \eqref{eq.cpeps_worst} hold (and also \eqref{eq.mps_worst} and \eqref{eq.cmps_worst} for $\sE_r(\tn{T}^{(\cdot,n)}, \delta)$ and $\sE_r(\C^{(\cdot, n)}, \xi)$. 
Hence the bound is tight (up to a second order error).
\end{proof}


\section{Numerical Experiments} \label{s.num}

In this section, we perform three numerical studies to compare the worst case relative error of a general MPS and an MPS in canonical form.
The error is always quantified as relative error in Frobenius norm.

\subsection{Worst Case Error of Perturbing Center Node} \label{ss.center_perturb}
In this example, we consider only perturbing the center of a canonical form. 
The purpose is to see in what manner the canonical form outperforms a general form and how it is related to number of nodes $N$ and maximum bond dimension $D$.
Suppose $\T = \sT(\tt{1},~\tt{2},~\cdots,~\tt{N})$ is an MPS, and its canonical form centered at the middle node $c = \lfloor N/2 \rfloor$ is $\C = \sT(\tsr C^{(1)},~\tsr C^{(2)},~\cdots,~\tsr C^{(N)})$. 
For a given magnitude of perturbation $\epsilon$, we introduce perturbation tensors $\tsr{\delta}$ and $\tsr{\beta}$ such that $\norm{F}{\tsr{\delta}} = \epsilon \norm{F}{\tt{c}}$, $\norm{F}{\tsr{\beta}} = \epsilon \norm{F}{\tsr{C}^{(c)}}$. 
The perturbed MPS are $\hat{\T} = (G,~\tt{1},~\cdots,~\tt{c} + \tsr{\delta},~\cdots,~\tt{N})$ and $\hat{\tsr C} = \sT(\tsr C^{(1)},~\cdots,~\tsr C^{(c)} + \tsr{\beta},~\cdots,~\tsr C^{(N)})$. 

From Section \ref{s.wrst}, in particular corollary \ref{cor.one_site}, we know the relative error for general MPS $\T$ is $e_g := \epsilon \cdot \frac{\norm{2}{\mat{M}_{\tt{c}}} \norm{F}{\tt{c}}}{ \norm{F}{\tsr T}}$, and for canonical MPS $\C$ is $e_c := \epsilon$. 
We study the ratio of the errors $R := e_g / e_c$ as $N$ and $D$ varies. 
For each choice of $N$ and $D$, we sample 300 MPS, with each entry chosen uniformly from $[-1, 1]$. 
For each sampled MPS $\T$, we normalize it (i.e. make $\norm{F}{\tsr T} = 1$) and compute the canonical form $\C$ centered at node $c$, and then we explicitly compute $R$.
Note that for the ratio, value of $\epsilon$ does not matter. The results are shown in Figure \ref{fig.worst_center}. 

As expected, the ratio $R > 1$. From Figure \ref{fig.worst_center}, we see an sub-logarithmic increase of $R$ with respect to $D$ for this model, for all $N$ values tested. 
It suggests that given $D$, $R$ is about the same for different $N$ value. 
There is a decreasing trend in $R$ as $N$ decreases when $D$ is large (see $D = 64, ~ 128)$, but not significant in view of confidence intervals and the high absolute value -- about 4.3. 
This result substantiates the remark in Section \ref{ss.dmrg} that canonical forms improves the attainable accuracy in worst-case sense. 
Although the improvement turns out to be insensitive to $N$ in this model, canonicalization becomes more and more critical when $D$ gets large. 
\begin{figure}[H]
    \centering
    \includegraphics[width=\linewidth / 2 , height = 8cm]{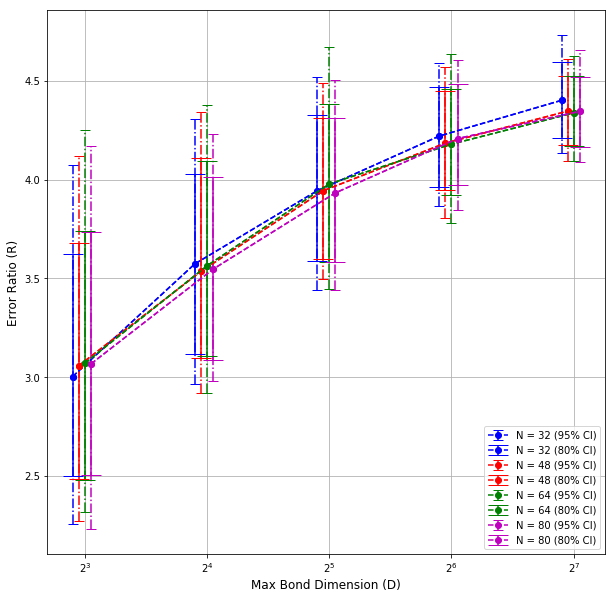}
    \caption*{\smaller{
    Fig.\ref{fig.worst_center} (Worst case error ratio of general form to canonical form). 
    Curves of $R$ with respect to bond dimension $D$ for different number of nodes $N$ are plotted (the $x$-axis is $\log$-scaled).
    Dots stand for the mean, and error bars stand for middle 80\% and 95 \% results} for 300 simulated cases. 
    (The horizontal difference between curves is artificially introduced for visibility. $\log_2D$ values are 3, 4, 5, 6, 7 for all $N$.
    }
    \captionlistentry{}
    \label{fig.worst_center}
\end{figure}
However, if we do NOT control bond dimension and allow it to grow exponentially from two ends, then the story is different. 
As $N$ grows, bond dimension at the center node grows, and by observation from last example, we expect $R$ grows. 
To further validate this, we do another test in which canonical center is chosen to be the second node, where bond dimension is independent of $N$. 
In both experiments, we sample 200 MPS for each value of $N$. 
As we can see in Figure \ref{fig.Dinf_worst}(a), a super-linear increase in $R$ is observed when canonical center is at the middle, whereas a decrease-flat curve is obtained when canonical center is at the second node. 

\begin{figure}[H]
    \centering
    \minipage{0.5\textwidth}
      \hspace{0.1\textwidth}
      \includegraphics[width=0.9\textwidth, height = 0.8\textwidth]{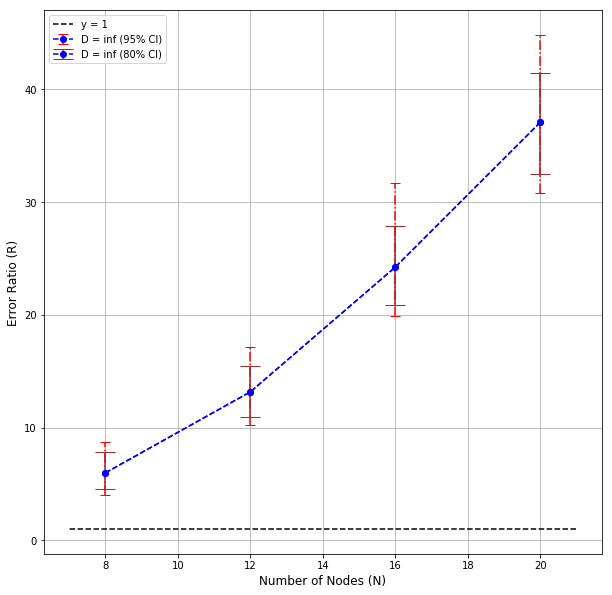}
    \endminipage\hfill
    \minipage{0.5\textwidth}%
      \includegraphics[width=0.9\textwidth, height = 0.8\textwidth]{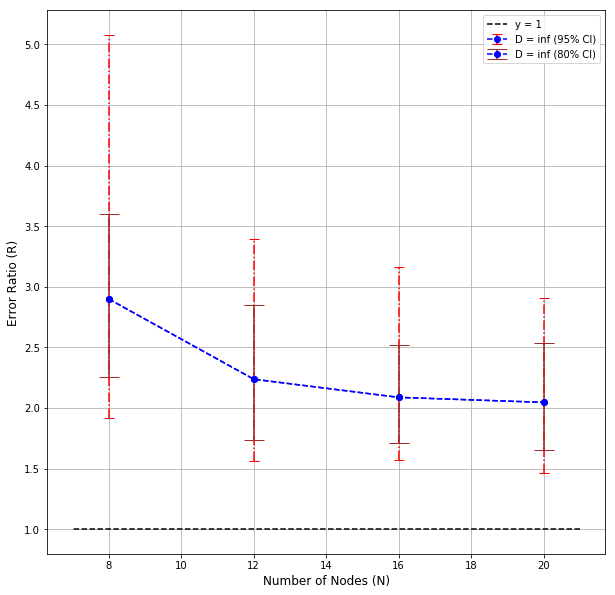}
      \hfill
    \endminipage
    \caption*{\smaller{
    Fig.\ref{fig.Dinf_worst} (Plot of $R$ with respect to $N$ when bond dimension is not controlled). \textbf{Left (a):} canonical center at the middle. \textbf{Right (b):} canonical center at the second node. Dots stand for the mean, and error bars stand for middle 80\% and 95 \% results for 200 generated MPS.}
    }
    \captionlistentry{}
    \label{fig.Dinf_worst}
\end{figure}

The decrease in Figure \ref{fig.Dinf_worst}(b) may be due to larger amount of cancellations in the right environment formed by $\tt{3},~ \cdots,~ \tt{N}$,
which leads to a smaller norm of the environment.
In conclusion, for this test model, we see that number of nodes does not seem to have remarkable effect on $R$, but the maximum bond dimension has a positive relation with $R$.

\subsection{All-site Perturbation}
In this example, we introduce perturbation to all sites of $\T$ and $\C$ defined in last example. 
Instead of directly compute the worst case errors established by corollary \ref{cor.mps_worst} and \ref{cor.cmps_worst}, which may not be uniformly tight for general MPS, we generate random perturbations to see how errors behave as $N$ and $D$ vary. 
Specifically, we generate, normalize, and canonicalize MPS in the same way as in Section \ref{ss.center_perturb}. For each $N$ and $D$, we sample 100 MPS. 
For each MPS $\T$ sampled, compute the canonical form $\C$, and we generate 200 perturbations and apply them to both $\T$ and $\C$ (scale each node of the perturbation so that the relative error of each node is $\epsilon$ respectively for $\T$ and $\C$). 
We compute error ratios (general form to canonical form) for these 200 perturbations, and set $R$ to be the largest one of 200 ratios. 
Statistically, this $R$ mimics the ratio of uniformly tight worst case errors of $\T$ and $\C$. 
In the test run, magnitude of perturbation is set as $\epsilon = 10^{-4}$. The results are plotted in Figure \ref{fig.Dfin_all}. 
\begin{figure}[H]
    \centering
    \includegraphics[width=\linewidth / 2, height = 8cm]{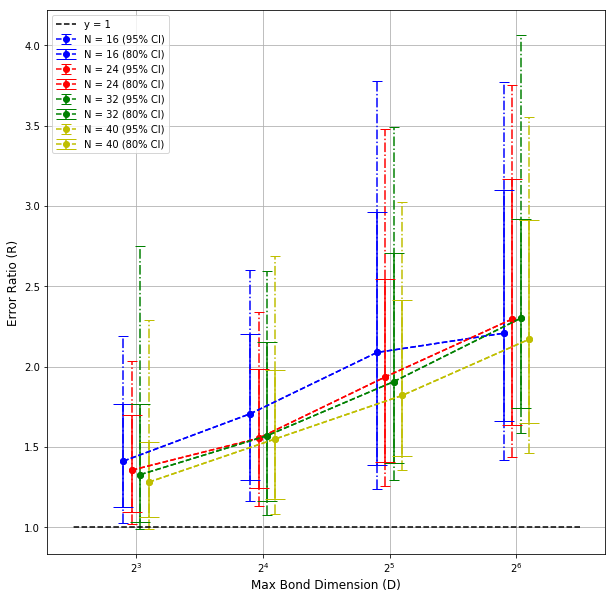}
    \caption*{\smaller{
    Fig.\ref{fig.Dfin_all} (All-site perturbation for general and canonical MPS). 
    Curves of $R$ with respect to bond dimension $D$ for different number of nodes $N$ are plotted. 
    Dots stand for the mean, and error bars stand for middle 80\% and 95 \% results for 100 generated MPS} (The horizontal difference between curves is artificially introduced for visibility. $\log_2D$ values are 3, 4, 5, 6 for all $N$.).
    }
    \captionlistentry{}
    \label{fig.Dfin_all}
\end{figure}

The resulting errors have higher variance than in single site perturbation (Figure \ref{fig.worst_center}) due to sampling.
First of all, in all cases the average ratio $R$ is remarkably above 1. 
This means even though there is a $\sqrt{D}$ factor for all-site perturbation, canonical form is still expected to have a better stability. 
This difference becomes more evident when $D$ gets larger. 
From Figure \ref{fig.Dfin_all}, we see $R$ increases as $D$ increases for all $N$. When $N = 16$, the curve starts to flatten when $D$ is large. This is expected since when $D$ is large enough and saturates bond dimension, in this case when $D \geq 256$, $R$ reaches its maximum and $D$ no longer has effect on $R$.
It is also noted that for fixed $D$, $R$ is not strongly related to $N$ as in single-site perturbation. 
One observation is that when $N$ is larger, the difference in error between canonical and general form tend to reveal with a larger bond dimension.
Overall, as bond dimension $D$ grows, converting to canonical form is more and more crucial for stability with respect to all-site perturbations.

Lastly, as in Section \ref{ss.center_perturb}, we study the influence of $N$ on $R$ when bond dimension is not controlled. This is also the maximum of $R$ with given $N$ as $D$ increases, as suggested by Figure \ref{fig.Dfin_all}.
As before, we generate 100 MPS for each value of $N$ and sample 200 perturbations for each generated MPS. 
We canonicalize MPS towards the middle node, apply scaled perturbation, and compare the error ratio $R$.
The magnitude of perturbation is $\epsilon = 10^{-4}$.
Due to exponential cost in time and memory and our relatively large sample size, we have to restrict the number of nodes.
The result is plotted in Figure \ref{fig.Dinf_all}.
\begin{figure}[H]
    \centering
    \includegraphics[width=\linewidth / 2 , height = 8cm]{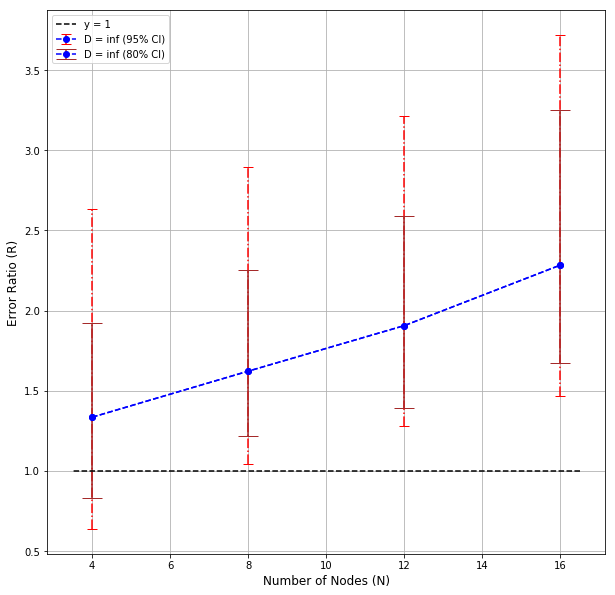}
    \caption*{\smaller{
    Fig.\ref{fig.Dinf_all}. Error ratio for all-site perturbation when bond dimension is not controlled. Dots stand for the mean, and error bars stand for middle 80\% and 95\% quantiles for 100 generated MPS.
    }}
    \captionlistentry{}
    \label{fig.Dinf_all}
\end{figure}

As expected, when we allow bond dimension to grow with $N$, we see an increase of $R$ with respect to $N$.
According to Figure \ref{fig.Dinf_all}, for $N = 16$, the maximum of $R$ over $D$ is about 2.0 $\sim$ 2.5, which is compatible with the result in \ref{fig.Dfin_all}.
If bond dimension is not controlled, the stability advantage of a canonical form is even more remarkable.

\subsection{Average-case Perturbation Error}
Lastly we illustrate our result on average-case error, theorem \ref{thm.ave}. We consider a small TN with 3 sites $\tsr A,~\tsr B,~\tsr C$ that form a triangle. Each node is a $D \times D$ matrix, and there is no out-going leg. Based our discussion in Section \ref{s.ave}, we aim to compare simulation results and theoretical results on average-case relative error under a sitewise perturbation with homogeneous variance $\sigma^2$ in each entry. For each $D$, we sample a TN with each entry being sampled from a uniform distribution on $[0, 1]$ (we did not use $[-1,1]$ just to avoid potential numerical instability when $\norm{F}{\tsr T}$ is close to 0). Then we sample 2000 random perturbations with each entry being sampled from a centered (mean 0) uniform distribution such that the standard deviation is scaled to be $\sigma = 10^{-3}$. Finally we compute the sample mean of the 2000 relative errors and compare to the theoretical value obtained from \eqref{eq.ave_err2}. The result is shown in Figure \ref{fig.ave}.

\begin{figure}[H]
    \centering
    \includegraphics[width=\linewidth / 2 , height = 8cm]{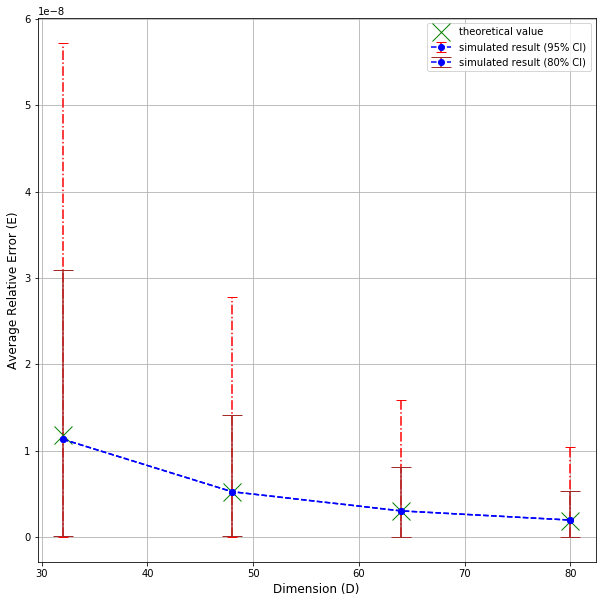}
    \caption*{\smaller{
    Fig.\ref{fig.ave}. Average-case relative error. Dots stand for the average of squared relative error of 2000 simulated results. Crossings are theoretical values proposed by theorem \ref{thm.ave}. Error bars stand for middle 80\% and 95\% quantiles of simulated results.
    }}
    \captionlistentry{}
    \label{fig.ave}
\end{figure}

Theoretical average and simulation average almost coincide. The theorem is confirmed. There is a decreasing trend in $E$ and its variance when $D$ gets larger. Though $\norm{F}{\mat{M}_{\tsr{T}}}$ increases with $D$, $\norm{F}{\tsr{T}}$ increases faster, and thus both decreasing trends are observed.

\vspace{2cm}


\section{Conclusion}
We have shown that numerical stability of a tensor network, including its condition number, worst-case and average-case perturbation errors, is characterized by its environment matrix. 
In particular, the worst-case perturbation error of a general tensor network is given as a solution to a quadratic system induced by the environment matrix, and explicit tight upper bounds are given. 
The average-case perturbation error for a general tensor network has also been derived and confirmed numerically.
We have put special focus on numerical advantage of a canonical form, and concluded theoretically that canonical forms provide benefits in reducing single-site perturbation error, and improving attainable accuracy for tensor network optimization algorithms. 
Numerical experiments have also confirmed that this benefit tends to be more and more significant for an MPS tensor network as the bond dimension grows. 

There are still challenges in canonicalization of tensor networks beyond MPS, for example, with PEPS the process is done in a costly iterative manner \cite{haghshenas2019conversion}.
Our work points to directions for improvement in canonicalization algorithms and design of such approaches for other tensor networks.
In particular, our error bounds suggest that it may suffice to maintain the tensor network in a ``well-conditioned'' tensor network gauge, in which environments are nearly orthogonal.

\bibliographystyle{plain}
\bibliography{paper}

\begin{thebibliography}{10}

\bibitem{czarnik2012projected}
Piotr Czarnik, Lukasz Cincio, and Jacek Dziarmaga.
\newblock Projected entangled pair states at finite temperature: Imaginary time
  evolution with ancillas.
\newblock {\em Physical Review B}, 86(24):245101, 2012.

\bibitem{DAVIDSON198949}
Ernest~R. Davidson.
\newblock Super-matrix methods.
\newblock {\em Computer Physics Communications}, 53(1):49 -- 60, 1989.

\bibitem{dolgov2013tt}
Sergey~V Dolgov.
\newblock {TT-GMRES}: solution to a linear system in the structured tensor
  format.
\newblock {\em Russian Journal of Numerical Analysis and Mathematical
  Modelling}, 28(2):149--172, 2013.

\bibitem{dolgov2014alternating}
Sergey~V Dolgov and Dmitry~V Savostyanov.
\newblock Alternating minimal energy methods for linear systems in higher
  dimensions.
\newblock {\em SIAM Journal on Scientific Computing}, 36(5):A2248--A2271, 2014.

\bibitem{evenbly2018gauge}
Glen Evenbly.
\newblock Gauge fixing, canonical forms and optimal truncations in tensor
  networks with closed loops.
\newblock {\em arXiv preprint arXiv: arXiv:1801.05390}, 2018.

\bibitem{garcia2006time}
Juan~Jos{\'e} Garc{\'\i}a-Ripoll.
\newblock Time evolution of matrix product states.
\newblock {\em New Journal of Physics}, 8(12):305, 2006.

\bibitem{guo2019general}
Chu Guo, Yong Liu, Min Xiong, Shichuan Xue, Xiang Fu, Anqi Huang, Xiaogang
  Qiang, Ping Xu, Junhua Liu, Shenggen Zheng, et~al.
\newblock General-purpose quantum circuit simulator with projected
  entangled-pair states and the quantum supremacy frontier.
\newblock {\em arXiv preprint arXiv:1905.08394}, 2019.

\bibitem{haegeman2016unifying}
Jutho Haegeman, Christian Lubich, Ivan Oseledets, Bart Vandereycken, and Frank
  Verstraete.
\newblock Unifying time evolution and optimization with matrix product states.
\newblock {\em Physical Review B}, 94(16):165116, 2016.

\bibitem{haghshenas2019conversion}
Reza Haghshenas, Matthew~J O'Rourke, and Garnet Kin-Lic Chan.
\newblock Conversion of projected entangled pair states into a canonical form.
\newblock {\em Physical Review B}, 100(5):054404, 2019.

\bibitem{holtz2012alternating}
Sebastian Holtz, Thorsten Rohwedder, and Reinhold Schneider.
\newblock The alternating linear scheme for tensor optimization in the tensor
  train format.
\newblock {\em SIAM Journal on Scientific Computing}, 34(2):A683--A713, 2012.

\bibitem{HUCKLE2013750}
T.~Huckle, K.~Waldherr, and T.~Schulte-Herbr{\"u}ggen.
\newblock Computations in quantum tensor networks.
\newblock {\em Linear Algebra and its Applications}, 438(2):750 -- 781, 2013.
\newblock Tensors and Multilinear Algebra.

\bibitem{hyatt2019dmrg}
Katharine Hyatt and EM~Stoudenmire.
\newblock {DMRG} approach to optimizing two-dimensional tensor networks.
\newblock {\em arXiv preprint arXiv:1908.08833}, 2019.

\bibitem{jozsa2006simulation}
Richard Jozsa.
\newblock On the simulation of quantum circuits.
\newblock {\em arXiv preprint quant-ph/0603163}, 2006.

\bibitem{khemani2016obtaining}
Vedika Khemani, Frank Pollmann, and Shivaji~Lal Sondhi.
\newblock Obtaining highly excited eigenstates of many-body localized
  {Hamiltonians} by the density matrix renormalization group approach.
\newblock {\em Physical review letters}, 116(24):247204, 2016.

\bibitem{doi:10.1137/07070111X}
T.~Kolda and B.~Bader.
\newblock Tensor decompositions and applications.
\newblock {\em SIAM Review}, 51(3):455--500, 2009.

\bibitem{lubasch2014algorithms}
Michael Lubasch, J~Ignacio Cirac, and Mari-Carmen Banuls.
\newblock Algorithms for finite projected entangled pair states.
\newblock {\em Physical Review B}, 90(6):064425, 2014.

\bibitem{lubasch2014unifying}
Michael Lubasch, J~Ignacio Cirac, and Mari-Carmen Banuls.
\newblock Unifying projected entangled pair state contractions.
\newblock {\em New Journal of Physics}, 16(3):033014, 2014.

\bibitem{markov2008simulating}
Igor~L Markov and Yaoyun Shi.
\newblock Simulating quantum computation by contracting tensor networks.
\newblock {\em SIAM Journal on Computing}, 38(3):963--981, 2008.

\bibitem{orus2014practical}
Rom{\'a}n Or{\'u}s.
\newblock A practical introduction to tensor networks: Matrix product states
  and projected entangled pair states.
\newblock {\em Annals of Physics}, 349:117--158, 2014.

\bibitem{orus2008infinite}
Roman Orus and Guifre Vidal.
\newblock Infinite time-evolving block decimation algorithm beyond unitary
  evolution.
\newblock {\em Physical Review B}, 78(15):155117, 2008.

\bibitem{oseledets2011dmrg}
Ivan Oseledets.
\newblock {DMRG} approach to fast linear algebra in the {TT}-format.
\newblock {\em Computational Methods in Applied Mathematics Comput. Methods
  Appl. Math.}, 11(3):382--393, 2011.

\bibitem{oseledets2011tensor}
Ivan~V Oseledets.
\newblock Tensor-train decomposition.
\newblock {\em SIAM Journal on Scientific Computing}, 33(5):2295--2317, 2011.

\bibitem{oseledets2012solution}
Ivan~V Oseledets and Sergey~V Dolgov.
\newblock Solution of linear systems and matrix inversion in the {TT}-format.
\newblock {\em SIAM Journal on Scientific Computing}, 34(5):A2718--A2739, 2012.

\bibitem{2017arXiv171005867P}
Edwin Pednault, John~A. Gunnels, Giacomo Nannicini, Lior Horesh, Thomas
  Magerlein, Edgar Solomonik, Erik~W. Draeger, Eric~T. Holland, and Robert
  Wisnieff.
\newblock Breaking the 49-qubit barrier in the simulation of quantum circuits.
\newblock {\em ArXiv e-prints arXiv:1710.05867}, October 2017.

\bibitem{rudelson2013HWineq}
Mark Rudelson and Roman Vershynin.
\newblock Hanson-wright inequality and sub-gaussian concentration.
\newblock {\em Electron. Commun. Probab.}, 18, paper no. 82, 9 pp., 2013.

\bibitem{RevModPhys.77.259}
U.~Schollw\"ock.
\newblock The density-matrix renormalization group.
\newblock {\em Rev. Mod. Phys.}, 77:259--315, Apr 2005.

\bibitem{schuch2007computational}
Norbert Schuch, Michael~M Wolf, Frank Verstraete, and J~Ignacio Cirac.
\newblock Computational complexity of projected entangled pair states.
\newblock {\em Physical review letters}, 98(14):140506, 2007.

\bibitem{tobler2012low}
Christine Tobler.
\newblock {\em Low-rank tensor methods for linear systems and eigenvalue
  problems}.
\newblock PhD thesis, ETH Zurich, 2012.

\bibitem{verstraete2004renormalization}
F.~Verstraete and J.~I. Cirac.
\newblock Renormalization algorithms for quantum-many body systems in two and
  higher dimensions, 2004.

\bibitem{verstraete2008matrix}
Frank Verstraete, Valentin Murg, and J~Ignacio Cirac.
\newblock Matrix product states, projected entangled pair states, and
  variational renormalization group methods for quantum spin systems.
\newblock {\em Advances in Physics}, 57(2):143--224, 2008.

\bibitem{vidal2007classical}
Guifr{\'e} Vidal.
\newblock Classical simulation of infinite-size quantum lattice systems in one
  spatial dimension.
\newblock {\em Physical review letters}, 98(7):070201, 2007.

\bibitem{white1992density}
Steven~R White.
\newblock Density matrix formulation for quantum renormalization groups.
\newblock {\em Physical review letters}, 69(19):2863, 1992.

\bibitem{white1993density}
Steven~R White.
\newblock Density-matrix algorithms for quantum renormalization groups.
\newblock {\em Physical Review B}, 48(14):10345, 1993.

\bibitem{ye2018tensor}
Ke~Ye and Lek-Heng Lim.
\newblock Tensor network ranks.
\newblock {\em arXiv preprint arXiv:1801.02662}, 2018.

\bibitem{yu2017finding}
Xiongjie Yu, David Pekker, and Bryan~K Clark.
\newblock Finding matrix product state representations of highly excited
  eigenstates of many-body localized {Hamiltonians}.
\newblock {\em Physical review letters}, 118(1):017201, 2017.

\bibitem{zaletel2019isometric}
Michael~P Zaletel and Frank Pollmann.
\newblock Isometric tensor network states in two dimensions.
\newblock {\em arXiv preprint arXiv:1902.05100}, 2019.

\end{thebibliography}

\end{document}